\numberwithin{equation}{section}
\begin{document}

\newtheorem{theorem}{Theorem}[section]
\newtheorem{prop}[theorem]{Proposition}
\newtheorem{lemma}[theorem]{Lemma}
\newtheorem{lem}[theorem]{Lemma}
\newtheorem{cor}[theorem]{Corollary}
\newtheorem{definition}[theorem]{Definition}
\newtheorem{conj}[theorem]{Conjecture}
\newtheorem{claim}[theorem]{Claim}
\newtheorem{qn}[theorem]{Question}
\newtheorem{defn}[theorem]{Definition}
\newtheorem{defth}[theorem]{Definition-Theorem}
\newtheorem{obs}[theorem]{Observation}
\newtheorem{rmk}[theorem]{Remark}
\newtheorem{ans}[theorem]{Answers}
\newtheorem{slogan}[theorem]{Slogan}
\newtheorem{eg}[theorem]{Example}
\newtheorem{assume}[theorem]{Assumption}
\newcommand{\bluecomment}[1]{\textcolor{blue}{#1}}

\newcommand{\thmref}[1]{Theorem~\ref{#1}}
\newcommand{\corref}[1]{Corollary~\ref{#1}}
\newcommand{\lemref}[1]{Lemma~\ref{#1}}
\newcommand{\propref}[1]{Proposition~\ref{#1}}
\newcommand{\defref}[1]{Definition~\ref{#1}}
\newcommand{\egref}[1]{Example~\ref{#1}}
\newcommand{\rmkref}[1]{Remark~\ref{#1}}
\newcommand{\exerref}[1]{Exercise~\ref{#1}}
\newcommand{\itemref}[1]{\textnormal{(\ref{#1})}}
\newcommand{\itemrefstar}[1]{(\ref*{#1})}
\newcommand{\figref}[1]{Figure~\ref{#1}}
\newcommand{\secref}[1]{Section~\ref{#1}}

\newcommand{\boundary}{\partial}
\newcommand{\dirac}{\textrm{Dirac}}
\newcommand{\hhat}{\widehat}
\newcommand{\C}{{\mathbb C}}
\newcommand{\bH}{{\mathbb H}}
\newcommand{\sqQ}{{\sqrt{\mathbb Q}}}
\newcommand{\Ga}{{\Gamma}}
\newcommand{\gam}{{\Gamma}}
\newcommand{\G}{{\Gamma}}
\newcommand{\s}{{\Sigma}}
\newcommand{\PSL}{{\mathrm{PSL}_2 (\mathbb{C})}}
\newcommand{\SL}{{\mathrm{SL}_2 (\mathbb{C})}}
\newcommand{\pslc}{{\mathrm{PSL}_2 (\mathbb{C})}}
\newcommand{\pslr}{{\mathrm{PSL}_2 (\mathbb{R})}}
\newcommand{\pslz}{{\mathrm{PSL}_2 (\mathbb{Z})}}
\newcommand{\pslq}{{\mathrm{PSL}_2 (\mathbb{Q})}}
\newcommand{\pslo}{{\mathrm{PSL}_2 (\OO)}}
\newcommand{\slq}{{\mathrm{SL}_2 (\mathbb{Q})}}
\newcommand{\slr}{{\mathrm{SL}_2 (\mathbb{R})}}
\newcommand{\slz}{{\mathrm{SL}_2 (\mathbb{Z})}}
\newcommand{\slf}{{\mathrm{SL}_2 (F)}}
\newcommand{\slo}{{\mathrm{SL}_2 (\mathbb{\OO})}}
\newcommand{\so}{{\mathrm{SO}}}
\newcommand{\Gr}{{\mathcal G}}
\newcommand{\integers}{{\mathbb Z}}
\newcommand{\natls}{{\mathbb N}}
\newcommand{\ratls}{{\mathbb Q}}
\newcommand{\reals}{{\mathbb R}}
\newcommand{\proj}{{\mathbb P}}
\newcommand{\lhp}{{\mathbb L}}
\newcommand{\tube}{{\mathbb T}}
\newcommand{\cusp}{{\mathbb P}}
\newcommand{\AAA}{{\mathcal A}}
\newcommand\HHH{{\mathbb H}}
\newcommand{\BB}{{\mathcal B}}
\newcommand\CC{{\mathcal C}}
\newcommand\DD{{\mathcal D}}
\newcommand\EE{{\mathcal E}}
\newcommand\FF{{\mathcal F}}
\newcommand\GG{{\mathcal G}}
\newcommand\HH{{\mathcal H}}
\newcommand\II{{\mathcal I}}
\newcommand\JJ{{\mathcal J}}
\newcommand\KK{{\mathcal K}}
\newcommand\LL{{L}}
\newcommand\MM{{\mathcal M}}
\newcommand\NN{{\mathcal N}}
\newcommand\OO{{\mathcal O}}
\newcommand\PP{{\mathcal P}}
\newcommand\QQ{{\mathcal Q}}
\newcommand\RR{{\mathcal R}}
\newcommand\SSS{{\mathcal S}}
\newcommand\TT{{\mathcal T}}
\newcommand\UU{{\mathcal U}}
\newcommand\VV{{\mathcal V}}
\newcommand\WW{{\mathcal W}}
\newcommand\XX{{\mathcal X}}
\newcommand\YY{{\mathcal Y}}
\newcommand\ZZ{{\mathcal Z}}
\newcommand\CH{{\CC\Hyp}}
\newcommand{\Chat}{{\hat {\mathbb C}}}
\newcommand\MF{{\MM\FF}}
\newcommand\PMF{{\PP\kern-2pt\MM\FF}}
\newcommand\ML{{\MM\LL}}
\newcommand\PML{{\PP\kern-2pt\MM\LL}}
\newcommand\GL{{\GG\LL}}
\newcommand\Pol{{\mathcal P}}
\newcommand\half{{\textstyle{\frac12}}}
\newcommand\Half{{\frac12}}
\newcommand\Mod{\operatorname{Mod}}
\newcommand\Area{\operatorname{Area}}
\newcommand\Isom{\operatorname{Isom}}
\newcommand\ep{\epsilon}
\newcommand\Hypat{\widehat}
\newcommand\Proj{{\mathbf P}}
\newcommand\U{{\mathbf U}}
 \newcommand\Hyp{{\mathbf H}}
\newcommand\D{{\mathbf D}}
\newcommand\Z{{\mathbb Z}}
\newcommand\bZ{{\mathbb Z}}
\newcommand\R{{\mathbb R}}
\newcommand\Q{{\mathbb Q}}
\newcommand\E{{\mathbb E}}
\newcommand\Emu{{\mathbb E_\mu}}
\newcommand\EXH{{ \EE (X, \HH_X )}}
\newcommand\EYH{{ \EE (Y, \HH_Y )}}
\newcommand\GXH{{ \GG (X, \HH_X )}}
\newcommand\GYH{{ \GG (Y, \HH_Y )}}
\newcommand\ATF{{ \AAA \TT \FF }}
\newcommand\PEX{{\PP\EE  (X, \HH , \GG , \LL )}}
\newcommand{\lct}{\Lambda_{CT}}
\newcommand{\lel}{\Lambda_{EL}}
\newcommand{\lgel}{\Lambda_{GEL}}
\newcommand{\lre}{\Lambda_{\mathbb{R}}}
\newcommand{\sas}{{S\alpha S}}
\newcommand{\lag}{{\Lambda_\Ga}}
\newcommand{\lagg}{{\Lambda_G^{(2)}}}
\newcommand{\laggr}{{\Lambda_G^{(2)}\times \R}}
\newcommand{\stwo}{{S^{(2)}}}
\newcommand{\fex}{{f_{ex}}}
\newcommand{\fexr}{{f_{exr}}}

\newcommand\til{\widetilde}
\newcommand\length{\operatorname{length}}
\newcommand\tr{\operatorname{tr}}
\newcommand\gesim{\succ}
\newcommand\lesim{\prec}
\newcommand\simle{\lesim}
\newcommand\simge{\gesim}
\newcommand{\simmult}{\asymp}
\newcommand{\simadd}{\mathrel{\overset{\text{\tiny $+$}}{\sim}}}
\newcommand{\ssm}{\setminus}
\newcommand{\diam}{\operatorname{diam}}
\newcommand{\pair}[1]{\langle #1\rangle}
\newcommand{\T}{{\mathbf T}}
\newcommand{\I}{{\mathbf I}}
\newcommand{\mups}{{\mu^{PS}}}
\newcommand{\mubm}{{\mu^{BM}}}
\newcommand{\mubms}{{\mu^{BMS}}}
\newcommand{\mubmr}{{\mu^{BMR}}}
\newcommand{\mubmrm}{{\mu^{BMRm}}}
\newcommand{\musk}{{\mu^{S}}}
\newcommand{\sro}{{\s_r(o)}}

\newcommand{\tw}{\operatorname{tw}}
\newcommand{\comm}{\operatorname{Comm_\pslr}}
\newcommand{\comme}{\operatorname{Comm}}
\newcommand{\base}{\operatorname{base}}
\newcommand{\trans}{\operatorname{trans}}
\newcommand{\rest}{|_}
\newcommand{\bbar}{\overline}
\newcommand{\UML}{\operatorname{\UU\MM\LL}}
\newcommand{\EL}{\mathcal{EL}}
\newcommand{\qle}{\lesssim}
\newcommand{\erfex}{{\E_r(\fex)}}

\newcommand\Gomega{\Omega_\Gamma}
\newcommand\nomega{\omega_\nu}
\newcommand\qcg{{QC(\Lambda_G)}}

\newcommand\convd{\stackrel{d}{\rightarrow}}
\newcommand\convp{\stackrel{p}{\rightarrow}}
\newcommand \convas{\stackrel{a.s.}{\rightarrow}}
\newcommand\eqd{\stackrel{d}{=}}

\title[Commutators and commensurators]{Commutators, commensurators, and $\mathrm{PSL}_2(\integers)$}

\author[T. Koberda]{Thomas Koberda}
\address{Department of Mathematics, University of Virginia, Charlottesville, VA 22904-4137, USA}
\email{thomas.koberda@gmail.com}
\urladdr{http://faculty.virginia.edu/Koberda}

\author{Mahan Mj}
\address{School of Mathematics, Tata Institute of Fundamental Research, 1 Homi Bhabha Road, Mumbai 400005, India}

\email{mahan@math.tifr.res.in}
\email{mahan.mj@gmail.com}
\urladdr{http://www.math.tifr.res.in/~mahan}

\thanks{The first author is partially supported by an Alfred P. Sloan Foundation Research Fellowship and by NSF Grants DMS-1711488
and DMS-2002596.
The second author is  supported in part by  the Department of Atomic Energy, Government of India, under project no.12-R\&D-TFR-5.01-0500, as
also by an endowment of the Infosys Foundation,
a DST JC Bose Fellowship, Matrics research project grant  MTR/2017/000005,
and CEFIPRA  project No. 5801-1.}
\subjclass[2010]{22E40 (Primary), 20F65, 20F67, 57M50}
\keywords{commensurator, commutator subgroup, arithmetic lattice, thin subgroup}

\date{\today}

\begin{abstract}
Let $H<\pslz$ be a finite index normal subgroup which is contained in a principal
congruence subgroup, and let $\Phi(H)\neq H$ denote a term of the lower central series or the derived series of $H$.
In this paper, we prove that the commensurator
of $\Phi(H)$ in $\pslr$ is discrete. We thus obtain a natural family of thin subgroups of $\pslr$ whose
commensurators are discrete, establishing some cases of a conjecture of Shalom.
\end{abstract}

\maketitle


\section{Introduction} The Commensurability Criterion for Arithmeticity due to Margulis \cite{margulis} \cite[16.3.3]{morris-book}
says that among irreducible lattices in  semi-simple Lie groups, arithmetic lattices are characterized  as those that have dense 
commensurators.
During the past decade, Zariski dense discrete subgroups of infinite covolume in semi-simple Lie groups,
also known as {\it thin subgroups} \cite{sarnak-thin}, have gained a lot of attention.
Heuristically, thin subgroups should be regarded as non-arithmetic, even though the essential difference between a thin group and a lattice
is that the former has infinite covolume in the ambient Lie group.
A question attributed to Shalom  makes this heuristic precise in the following way:

\begin{qn} \cite{llr}
	\label{mainq}
Suppose that
$\Ga$
is a thin subgroup of a semisimple Lie group
$G$.   Is
the commensurator of
$\Ga$ discrete?
\end{qn}
Let $X$ be the symmetric space of non-compact type associated to $G$ and 
 $\partial X$ denote its Furstenberg boundary. Question \ref{mainq} has been answered affirmatively in the following cases:
\begin{enumerate}
	\item Let $\lag$ denote the limit of $\Ga$ on $\partial X$. If $\lag \subsetneq \partial X$, then the answer to Question \ref{mainq} is affirmative (\cite{greenberg} for $G=\pslc$ and  \cite{mahan-commens} in the general case).
	\item If $G=\pslc$ and $\Ga$ is finitely generated, then the answer to Question \ref{mainq} is affirmative \cite{llr,mahan-commens}.
\end{enumerate}

Question \ref{mainq} thus  remains unaddressed when 
\begin{enumerate}
\item $\lag = \partial X$, and further,
\item $\Ga$ is not a finitely generated subgroup of $\pslc$.
\end{enumerate}

Curiously, Question \ref{mainq} remains open even for thin subgroups $\Ga$ of the simplest non-compact simple Lie group $G=\pslr$, with
$\Ga$ satisfying the condition $\lag = \partial \Hyp^2 = S^1$. It is easy to see that such a $\Ga$ cannot be finitely generated.
To see this last claim, if
$\Ga$ is thin then discreteness implies that $\gam$ must be free or the fundamental group of a closed surface. Since thin groups have
infinite covolume, $\gam$ cannot be a closed surface group, and hence must be free. It follows that $\Sigma=\Hyp^2/\gam$ must be a surface
with boundary or punctures. If $\Sigma$ has infinitely many punctures or boundary components, or if $\Sigma$ has infinite genus, then
clearly $\gam$ is not finitely generated. If $\Sigma$ has finite genus and
only finitely many punctures and no boundary components, then $\Sigma$
has finite volume and so $\gam$ is not thin. If $\Sigma$ has a boundary component
(i.e.~a flaring end) then $\Sigma$ admits a proper geodesically convex
core,
in which case the limit set $\lag$ will be properly contained in $\partial \Hyp^2$.

In this paper, we shall study commensurators of certain natural infinite index subgroups of $\pslz$.
If $\G<G$ is an arbitrary subgroup of a group $G$,
we define \[\mathrm{Comm}_G(\G)=\{g\in G\mid [\G:\G^g\cap \G]\textrm{ and }[\G^g:\G^g\cap \G]<\infty\}.\]
Here, we use exponentiation notation to denote conjugation, so that
$\G^g=g^{-1}\G g$.

\subsection{The main result}
In this paper, we will concentrate on thin subgroups of $\pslr$, which are of particular interest because of their intimate
connections to hyperbolic geometry via the identification \[\pslr\cong\Isom^+(\Hyp^2).\] For an integer $k\geq 2$, we will write
$\G(k)<\pslz$ for the \emph{level $k$ principal congruence subgroup}, which is to say the kernel of the map
\[\pslz\to\mathrm{PSL}_2(\Z/k\Z)\]
given by reducing the entries modulo $k$. 

As a matter of notation, we will write $\pslq\sqQ$ for the projectivization of the set of matrices in
$\mathrm{SL}_2(\R)$ which differ from a matrix in $\mathrm{GL}_2(\Q)$ by
a scalar matrix which is a square root of a rational number. That is, we have $A\in\pslq\sqQ$ if there
is a representative of $A$ in $\mathrm{SL}_2(\R)$, a rational number $q\in\Q$,
and a matrix $B\in\mathrm{GL}_2(\Q)$ such that \[A=\begin{pmatrix}\sqrt{q}&0\\0&\sqrt{q}\end{pmatrix}\cdot B.\]

Since we may also write \[A=\begin{pmatrix}\sqrt{q}&0\\0&\sqrt{q^{-1}}\end{pmatrix}\cdot B'\] for some $B'\in\mathrm{SL}_2(\Q)$ and since
the matrix \[\begin{pmatrix}\sqrt{q}&0\\0&\sqrt{q^{-1}}\end{pmatrix}\] normalizes $\mathrm{SL}_2(\Q)$, it is easy to see that
$\pslq\sqQ$ forms a group which can be viewed as the join
\[\pslq\cdot\left\langle \begin{pmatrix}\sqrt{q}&0\\0&\sqrt{q^{-1}}\end{pmatrix}\mid q\in \Q\right\rangle<\pslr.\]

We need to consider $\pslq\sqQ$ because of  the ambient group where the commensurator lives. We recall (see \cite[p. 92]{morris-book}
or \cite[Ex. 6d]{loeffler}) that  \[\comm(\pslz)=\pslq\sqQ,\] which is dense in $\pslr$.
As a consequence, it is easy to see that if $\G<\pslz$ has finite index then
\[\comm(\G)=\pslq\sqQ.\]

For an arbitrary group $G$, we recall the definition of the \emph{lower central series} and the \emph{derived series} of $G$. For the lower
central series, we define $\gamma_1(G)=G$ and $\gamma_{i+1}(G)=[G,\gamma_i(G)]$. The derived series is defined by $D_1(G)=G$ and
$D_{i+1}(G)=[D_i(G),D_i(G)]$. We will often use the notation $G'=[G,G]$ for the derived subgroup of $G$. Observe that
\[G'=D_2(G)=\gamma_2(G),\] and if $G$ is a free group then for $i\geq 2$ we have that $\gamma_i(G)$ and $D_i(G)$ are both properly
contained in $G$ as infinitely generated characteristic subgroups of infinite index.

The purpose of this article is to establish following result, which answers Question~\ref{mainq}
in the affirmative for, perhaps, the most `arithmetically' defined examples:

\begin{theorem}\label{thm:main}
Let $H\lhd \pslz$ be a finite index normal
subgroup with $H<\G(k)$ for some $k\geq 2$, and let $\Phi(H)$ denote a term in the lower central series or derived
series of $H$. Suppose furthermore
that $\Phi(H)\neq H$, so that $[H:\Phi(H)]=\infty$.
If \[g\in\comm(\Phi(H)),\] then we have that
$g^2\in\pslz$. In particular, $\comm(\Phi(H))$ is discrete.
\end{theorem}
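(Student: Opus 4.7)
My plan is to exploit the inclusion $gLg^{-1} \subseteq \Phi(H) \subseteq \pslz$ for appropriate finite-index subgroups $L$, together with Zariski density, to extract algebraic rationality and integrality constraints on $g$.

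I first observe that $\Phi(H)$ is normal in $\pslz$---it is characteristic in $H$, and $H$ is normal in $\pslz$---so $\pslz \subseteq \comm(\Phi(H))$, and the content of the theorem is an upper bound on the commensurator. Given $g \in \comm(\Phi(H))$, set $L = \Phi(H) \cap g^{-1}\Phi(H)g$; then $L$ has finite index in each of $\Phi(H)$ and $g^{-1}\Phi(H)g$, and $gLg^{-1} \subseteq \Phi(H) \subseteq \pslz$. Moreover $L$ is Zariski dense in $\mathrm{SL}_2$: it is finite index in $\Phi(H)$, and as a non-trivial term of the lower central or derived series of the Zariski-dense lattice $H$, the subgroup $\Phi(H)$ is itself Zariski dense. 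Hence the inner automorphism $\phi_g$ of $\mathrm{SL}_2$ sends the $\Q$-Zariski-dense $\Q$-rational set $L$ into $\mathrm{SL}_2(\Q)$. A Galois-descent argument applied to the graph of $\phi_g$ in $\mathrm{SL}_2\times\mathrm{SL}_2$ then forces $\phi_g$ to be defined over $\Q$, so $g \in \mathrm{PGL}_2(\Q) \cap \pslr = \pslq\sqQ$.

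To promote $g \in \pslq\sqQ$ to the integral statement $g^2 \in \pslz$, I apply the previous paragraph to $g^2 \in \comm(\Phi(H))$, obtaining $g^2 \in \pslq$. Choose an integral matrix representative $g^2 = Q/d$ with $Q \in \mathrm{Mat}_2(\Z)$ primitive and $\det Q = d^2$. The containment $g^2 L_2 g^{-2} \subseteq \pslz$, where $L_2 = \Phi(H) \cap g^{-2}\Phi(H)g^2$, becomes the congruence
\[
Q\,\ell\,\mathrm{adj}(Q) \equiv 0 \pmod{d^2}
\]
for every $\ell \in L_2$. Invoking strong approximation for the thin Zariski-dense subgroup $L_2$---and passing to a further finite-index subgroup of $\Phi(H)$ if needed to handle the bad primes---the images of $\{\ell-I : \ell \in L_2\}$ in $\mathrm{Mat}_2(\Z/p^2)$ span a sufficiently large submodule, for each prime $p \mid d$, that the displayed congruence forces $\mathrm{adj}(Q) \equiv 0 \pmod{p^2}$. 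Combined with the primitivity of $Q$ this yields $d=1$, so $g^2 \in \pslz$. The discreteness of $\comm(\Phi(H))$ follows, since $\{g \in \pslr : g^2 \in \pslz\}$ is itself discrete.

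The principal obstacle is this integrality step. The $\Q$-rationality of $g$ falls out cleanly from Zariski density and standard descent, but promoting $g^2$ from $\pslq$ to $\pslz$ requires a carefully chosen $L_2$ and a robust strong-approximation argument for the thin subgroup $L_2$; the hypothesis $H \subseteq \Gamma(k)$ enters here to control the denominators in the relevant congruences. The factor of $2$ in the conclusion precisely reflects the $\sqrt{q}$-scaling built into $\pslq\sqQ$.
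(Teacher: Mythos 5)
Your first step is essentially fine and recovers the paper's Lemma~\ref{lem:integcomm} (proved there via Skolem--Noether applied to the quaternion algebra generated by $H$ over $\Q(\tr H)$, with an alternative limit-set proof in the appendix): Zariski density of $L$ inside $\mathrm{SL}_2(\Q)$ does force the conjugation to be defined over $\Q$, hence $g\in\pslq\sqQ$. The fatal gap is the integrality step. The statement you are trying to prove there --- that an element of $\pslq$ conjugating some Zariski-dense subgroup $L_2<\pslz$ into $\pslz$ must lie in $\pslz$ --- is false, and no strong-approximation refinement can rescue it. Take $h$ to be the class of $\mathrm{diag}(p,1/p)$, i.e.\ $Q=\mathrm{diag}(p^2,1)$, $d=p$, and $L_2=\Phi(H)\cap\G(p^2)$, which is Zariski dense (it has finite index in $\Phi(H)$). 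Then $h L_2 h^{-1}\subseteq\pslz$, and indeed $Q\,\ell\,\mathrm{adj}(Q)\equiv 0 \pmod{p^2}$ for every $\ell\in L_2$ because the lower-left entry of $\ell$ is divisible by $p^2$; yet $\mathrm{adj}(Q)=\mathrm{diag}(1,p^2)\not\equiv 0\pmod{p^2}$ and $h\notin\pslz$. So the claimed spanning consequence of strong approximation fails exactly when $L_2$ sits in a deep congruence subgroup at the bad primes --- and the hypothesis $H<\G(k)$ guarantees this happens at primes dividing $k$, so it works against you, not for you; passing to a further finite-index subgroup only shrinks the span. More structurally, your step uses nothing about $\Phi(H)$ beyond Zariski density and integrality, so if it were correct it would show that $\comm(\G(k))$, or $\comm(H)$, consists of elements whose squares are integral; but $\comm(H)=\comm(\G(k))=\pslq\sqQ$ is dense in $\pslr$. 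Hecke-type elements such as $\mathrm{diag}(\sqrt p,1/\sqrt p)$ commensurate every finite-index subgroup of $\pslz$, and the whole content of the theorem is to show they fail to commensurate the infinite-index subgroup $\Phi(H)$ --- a property that is not detectable by congruence conditions on matrix entries.

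This is precisely where the hypotheses your proposal never uses must enter: $\Phi(H)\neq H$, normality of $H$ in $\pslz$, and $H<\G(k)$. The paper's route, for which you have no substitute, is: (i) a homological pseudo-action of $H^{g^2}$ on $H_1(H,\Z)$, which must be trivial, since otherwise Lemma~\ref{lem:filtration} shows $\Phi(H)$ and $\Phi(H)^{g^2}$ are not commensurable; (ii) triviality on parabolic classes, combined with the fact that distinct cusps of $\Hyp^2/\G(k)$ represent distinct nontrivial homology classes (the real role of $H<\G(k)$), forcing $H^{g^2}<\G(k)$; (iii) Chevalley--Weil/Gasch\"utz theory (Lemma~\ref{lem:almost-inv}) to exclude $H^{g^2}\neq H$; and (iv) minimality of the modular orbifold (Lemma~\ref{lem:invariant}) to conclude $g^2\in\pslz$. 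Your closing sentence is also wrong: the set $\{g\in\pslr : g^2\in\pslz\}$ is not discrete (it contains every involution of $\pslr$, a two-parameter family); the paper instead applies Lemma~\ref{lem:square-discrete}, using that $\comm(\Phi(H))$ is a Zariski-dense subgroup all of whose elements have square in $\pslz$, together with Selberg's lemma.
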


In Theorem~\ref{thm:main}, it is unclear to the authors how to weaken the hypothesis that $H$ be normal in
$\pslz$, as well as the assumption that $H$ be contained in a principal congruence subgroup. We will outline a conjectural picture shortly.

We remark that in order to establish a perfect analogy with Margulis' Arithmeticity Theorem,
one would like to show that $\Phi(H)$ has finite index
in $\comm(\Phi(H))$. This, however, is simply not feasible.
On the one hand, if $H=\G(k)$ for some $k\geq 2$ then $\Phi(H)$ is normal in $\pslz$, whence
\[\pslz<\comm(\Phi(H)).\] On the other hand, the index of $\Phi(H)$ in $\pslz$ is always infinite under
the hypotheses of Theorem~\ref{thm:main}.
Nevertheless, we obtain the following corollary to Theorem~\ref{thm:main}, which is the correct way to mend the analogy with the Arithmeticity
Theorem:
\begin{cor}\label{cor:shalom-cor}
Let $\Phi(H)$ be as in Theorem~\ref{thm:main}. Then $\comm(\Phi(H))$ is commensurable with $\pslz$. In particular, $\comm(\Phi(H))$
contains the normalizer of $\Phi(H)$ with finite index.
\end{cor}

Corollary~\ref{cor:shalom-cor} follows from Theorem~\ref{thm:main} by standard methods from the theory
	of lattices in Lie groups. Indeed, $\Phi(H)$ is normal
in $H$, and
the normalizer of $\Phi(H)$ is obviously contained in $\comm(\Phi(H))$.
Note that $\Sigma=\Hyp^2/H$ is a finite volume hyperbolic orbifold. Theorem~\ref{thm:main} implies that
\[\Hyp^2/\comm(\Phi(H))\] is also a finite volume hyperbolic orbifold, and so is a quotient of $\Sigma$ by a finite group of isometries.
It follows that $\comm(\Phi(H))$ is commensurable with $\pslz$ and contains the normalizer of $\Phi(H)$ with finite index. The reader
may compare these last remarks with Lemma~\ref{normalizer} below, for instance.

In our notation, we will suppress which series for $H$ as well as which term we are considering, since this will not cause particular confusion.
Observe that since $H$ has finite index in $\pslz$ and since $\Phi(H)$ is a non-elementary normal subgroup of $H$, we have that on the
level of limit sets, \[\Lambda_{\pslz}=\Lambda_H=\Lambda_{\Phi(H)}=S^1.\] The reader may consult~\cite{benoistgafa}
for more details on Zariski density
and limit sets, for instance.
We have in particular that the group $\Phi(H)$ falls outside of the purview of extant
results concerning the commensurators of thin subgroups.

The techniques used in the proof of Theorem \ref{thm:main} differ widely from those used in \cite{llr,mahan-commens}. These papers used an action on a topological space and discreteness of the commensurator stemmed from the fact that the commensurator preserved a `discrete geometric pattern' (in the sense of Schwartz, cf.\ \cite{schwartz-pihes,schwartz-inv,biswasmj,mahan-pattern}). In this paper we use an {\it algebraic} action from Chevalley-Weil theory and homological algebra as a replacement in
order to conclude discreteness of the commensurator.
Since the proof is somewhat tricky and consists of a number of
modular pieces, we outline the steps involved (using the notation of Theorem \ref{thm:main} above):

\begin{enumerate}
\item We discuss some basic facts about principal congruence groups in Section \ref{sec:background}, and observe in Section \ref{sec:invariant} that the normalizer of  $\Phi(H)$ lies in $\pslz$.
\item Section \ref{sec:gaschutz} introduces the first technical tool in the paper based on Chevalley-Weil theory. We prove (Lemma \ref{lem:almost-inv}) that if $g\in\pslr$ conjugates  $H<\pslz$ to $H^g\neq H$ and both $H,H^g$ are contained in a common free subgroup $F$,
 then $g$ does not lie in the
commensurator of $\Phi(H)$.
\item Section \ref{sec:intcomm} is  devoted to proving that if $g \in \pslr$ commensurates $\Phi(H)$, then in fact 
$g^2 \in \pslq\sqQ$ (Lemma \ref{lem:integcomm}).
The proof, given in Section \ref{sec:ant}, uses ideas from  invariant trace fields and quaternion division algebras and is number-theoretic
in flavor.
\item In Section \ref{sec:homology} we introduce the last tool, which to the knowledge of the authors is completely novel:
a partial action on homology. We call this partial action a pseudo-action and study it on homology classes carried by cusps to complete the proof of Theorem \ref{thm:main}.
\end{enumerate}

\subsection*{A conjectural picture}
The proof of Theorem~\ref{thm:main} draws on several different areas of mathematics, including hyperbolic geometry, homological algebra, 
noncommutative algebra, and Galois
theory. In order to make all the arguments work, we were forced to adopt the hypothesis that $H$ be contained as a normal subgroup of a principal
congruence subgroup. However, we expect the following more general statements to hold:

\begin{conj}\label{conjpslr}
Let $H<\pslz$ be a finite index subgroup with $b_1(H)\geq 1$. Then $\comm(\Phi(H))$ is discrete provided that $H\neq \Phi(H)$.
More generally, let $K$ be an  infinite index normal subgroup of a lattice $\Ga<\pslr$ such that $|K|=\infty$. Then $\comm(K)$ is discrete.
In all cases, we have that $\comm(K)$ contains the normalizer of $K$ with finite index.
\end{conj}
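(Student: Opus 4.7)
The plan is to deduce the theorem in three progressively finer stages: first show $g^2 \in \pslq\sqQ$, then $g^2 \in N_{\pslr}(H) \cap \pslq\sqQ$, then $g^2 \in \pslz$. The final discreteness assertion will then follow by a soft argument: if $\comm(\Phi(H))$ accumulated at $1 \in \pslr$, then the squares of accumulating elements would accumulate at $1 \in \pslz$, contradicting discreteness of $\pslz$ (involutions in $\pslr$ are bounded away from the identity).

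For the first reduction, I would invoke the theory of invariant trace fields and quaternion algebras for Kleinian/Fuchsian groups. Since $\Phi(H) < \pslz$, its invariant trace field is $\Q$ and its invariant quaternion algebra is $M_2(\Q)$, and both are commensurability invariants. Any $g \in \pslr$ commensurating $\Phi(H)$ must therefore preserve this $\Q$-algebra structure up to a twist, so that Neumann--Reid / Maclachlan--Reid style arguments should force $g^2 \in \comm(\pslz) = \pslq\sqQ$. The squaring is needed to absorb a sign coming from the projectivization.

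For the second reduction, given $g^2 \in \pslq\sqQ$, I would choose a principal congruence subgroup $\Gamma(N)$ large enough that $\Gamma(N)$ is free and that both $H$ and $H^{g^2}$ are contained in $\Gamma(N)$; this is possible because $g^2$ commensurates $\pslz$. Chevalley--Weil theory, encoded in the cited Lemma~\ref{lem:almost-inv}, then says that if $H^{g^2} \neq H$, the $\Q[\Gamma(N)/\Phi(H)]$-module structures on the rational first homology of the regular covers of $\Hyp^2/\Gamma(N)$ corresponding to $\Phi(H)$ and $\Phi(H)^{g^2}$ differ in a way that obstructs any commensurability of $\Phi(H)$ with $\Phi(H)^{g^2}$. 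This reduces us to $g^2 \in N_{\pslr}(H) \cap \pslq\sqQ$.

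The last reduction is the main obstacle, and this is where I expect to spend the most work. Even when $g^2$ normalizes $H$, one still has to rule out the possibility that $g^2$ lies in $\pslq\sqQ \smallsetminus \pslz$ (e.g., has nontrivial denominators or a nontrivial $\sqrt{q}$ factor). Since $g^2$ does not in general normalize the infinitely generated group $\Phi(H)$, it does not genuinely act on $H_1(\Phi(H);\Q)$. The plan is to define a pseudo-action by restricting to the finite-index subgroup $\Phi(H) \cap \Phi(H)^{g^2}$, on which $g^2$ does act by conjugation, and then transferring back via the inclusion-induced maps on homology. This pseudo-action is not a homomorphism, but when restricted to the subspace spanned by cuspidal (peripheral) homology classes it should be a well-defined partial map, because the cusps of $\Phi(H)$ are indexed by $H$-orbits on $\Q \cup \{\infty\}$ and the pseudo-action must intertwine with the explicit $\pslq\sqQ$-action on $\Q \cup \{\infty\}$. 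A careful analysis of how this action permutes cusp classes and rescales peripheral generators should then constrain the denominator and the $\sqrt{q}$-factor of $g^2$, forcing $g^2 \in \pslz$. The technical heart, and the most novel part, is simultaneously making the pseudo-action well-defined on a large enough cuspidal subspace and extracting sharp integrality information from it.
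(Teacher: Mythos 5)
The statement you are trying to prove is stated in the paper as a \emph{conjecture}, not a theorem: the authors prove discreteness of $\comm(\Phi(H))$ only under the extra hypotheses that $H$ is \emph{normal} in $\pslz$ and is contained in a principal congruence subgroup $\G(k)$, and they explicitly say they do not know how to remove either hypothesis. Your proposal is essentially a replay of their proof of Theorem~\ref{thm:main}, and it inherits exactly the places where those hypotheses are used, so it does not prove the conjecture. Concretely: your second reduction asserts that one can choose $\Gamma(N)$ free with $H, H^{g^2}<\Gamma(N)$ ``because $g^2$ commensurates $\pslz$.'' This fails twice. First, under the conjecture's hypotheses $H$ need not be contained in \emph{any} principal congruence subgroup (it may contain torsion, or the parabolic $\begin{pmatrix}1&1\\0&1\end{pmatrix}$). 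Second, even when $H<\G(k)$, commensurating $\pslz$ does not make the conjugate integral: conjugating $\begin{pmatrix}1&1\\0&1\end{pmatrix}$ by $\mathrm{diag}(\sqrt q,\sqrt q^{-1})$ gives $\begin{pmatrix}1&q\\0&1\end{pmatrix}$ with $q\in\Q$ arbitrary. Proving $H^{g^2}<\G(k)$ is precisely the content of the paper's cuspidal pseudo-action argument (Sections~\ref{sec:pseudo}--\ref{sec:parabolics}), and that argument needs $H<\G(k)$ through Corollary~\ref{cor:homology} (cusps of $\Hyp^2/H$ homologically nontrivial and pairwise distinct) and the explicit $k\Z$ integrality computation; you have placed the pseudo-action in your third step, after already assuming the containment it is supposed to establish. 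Moreover, Lemma~\ref{lem:almost-inv}, which you invoke, requires $H$ to be normal in the ambient free group $F$ (Chevalley--Weil/Gasch\"utz needs a genuine $F/H$-action on $H_1(H,\Q)$), and Lemma~\ref{lem:invariant} (the step $H^{g}=H\Rightarrow g\in\pslz$) uses normality of $H$ in $\pslz$; under the conjecture's hypothesis that $H$ is merely of finite index with $b_1(H)\geq 1$, neither is available, and no substitute is offered.

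The second, more general half of the conjecture is not addressed at all by your outline. There $K$ is an arbitrary infinite, infinite-index normal subgroup of an arbitrary lattice $\Ga<\pslr$: $K$ need not be a term of a lower central or derived series, the (invariant) trace field need not be $\Q$ so your first reduction to $\pslq\sqQ$ has no analogue, $\Ga$ may be cocompact so there are no parabolics or cusps and the entire cuspidal-homology mechanism disappears, and $\Ga/K$ need not act on anything resembling the homology of a free group. So while your first reduction is sound for the first statement (it is Lemma~\ref{lem:integcomm}, valid for any non-elementary $H<\pslz$) and your closing discreteness argument is fine, the middle of the argument has genuine gaps exactly at the points that make the statement a conjecture rather than a corollary of the paper's methods.
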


In the first statement of Conjecture \ref{conjpslr}, the Betti number assumption $b_1(H)\geq 1$
is simply to guarantee that the derived subgroup $[H,H]=H'$ has infinite index in $H$. It would also be a reasonable alternative assumption
to require $H$ to be torsion-free. Normality of the infinite index subgroups is always assumed in order to make the limit set coincide
with the full circle.


\subsection{Preliminaries on principal congruence subgroups}\label{sec:background}

In this subsection, we gather some well-known facts about principal congruence subgroups which will be useful in the sequel. We include
proofs for the convenience of the reader and to keep the discussion as self-contained as possible.

\begin{lem}\label{lem:pctf}
Let $k\geq 2$. Then $\G(k)$ is a free group.
\end{lem}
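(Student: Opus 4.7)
The plan is to exhibit $\Gamma(k)$ as the fundamental group of a non-compact surface and then invoke the classical fact that non-compact surfaces have free fundamental groups.

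First I would show that $\Gamma(k)$ is torsion-free for every $k\geq 2$. The torsion elements of $\pslz$ are, up to conjugacy, the images of the order-two rotation $S=\begin{pmatrix}0&-1\\1&0\end{pmatrix}$ and the order-three rotation $ST=\begin{pmatrix}0&-1\\1&1\end{pmatrix}$. Any torsion element of $\pslz$ is conjugate to a power of one of these. Since conjugation inside $\mathrm{SL}_2(\Z)$ commutes with reduction mod $k$, it is enough to check that none of $S$, $ST$, $(ST)^2$ reduce to $\pm I$ modulo $k$ for $k\geq 2$; this is immediate by inspection of their entries. Hence $\Gamma(k)$ contains no element of finite order.

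Next, since $\Gamma(k)$ is a discrete subgroup of $\pslr=\Isom^+(\Hyp^2)$ acting freely on $\Hyp^2$, the quotient $S_k=\Hyp^2/\Gamma(k)$ is a hyperbolic surface with $\pi_1(S_k)\cong\Gamma(k)$. Because $\Gamma(k)$ has finite index in $\pslz$ and $\pslz\backslash\Hyp^2$ is non-compact (the standard modular surface has a cusp at $\infty$), the finite cover $S_k$ is also non-compact: it has at least one cusp.

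Finally, any connected non-compact surface (with or without boundary) deformation retracts onto a graph, and therefore has a free fundamental group. Applied to $S_k$, this gives that $\Gamma(k)\cong\pi_1(S_k)$ is free, as desired.

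The only real content is the torsion-free statement; the rest is a standard appeal to surface topology. Since the orders of torsion in $\pslz$ are $\{2,3\}$ and the finitely many conjugacy class representatives can be checked by hand, I do not anticipate any serious obstacle.
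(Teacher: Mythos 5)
Your proposal is correct and follows essentially the same route as the paper: reduce torsion-freeness to the order-$2$ and order-$3$ conjugacy representatives, observe they survive in $\mathrm{PSL}_2(\Z/k\Z)$, and then conclude freeness from the fact that $\G(k)$ is the fundamental group of a non-compact (non-closed) surface. No gaps to report.
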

\begin{proof}
The quotient $\Hyp^2/\pslz$ is the $(2,3,\infty)$ hyperbolic orbifold. It follows that if $g\in\pslz$ has finite order then it is conjugate to
(the image of)
one of the two matrices \[A=\begin{pmatrix}0&-1\\1&0\end{pmatrix}\] or \[B=\begin{pmatrix}1&-1\\1&0\end{pmatrix}\] of orders $2$ and $3$
respectively. Let \[q\colon\pslz\to Q\] be a finite quotient. If $A$ and $B$ do not lie in the kernel of $q$ then $\ker q$ is torsion-free. Indeed,
if $\ker q$ contains a torsion element then this element would be conjugate to either $A$ or $B$, so that normality would imply that $A$ or
$B$ lies in $\ker q$, contrary to the assumption.
Since $A$ and $B$ are clearly nontrivial in $\pslz/\G(k)$, we see that $\G(k)$ must be torsion-free. Since
$\G(k)$ is torsion-free and acts freely and properly discontinuously on $\Hyp^2$, and since $\Hyp^2/\pslz$ is noncompact
and orientable, we have
that $\Hyp^2/\G(k)$ is a noncompact $2$--dimensional manifold with fundamental group $\G(k)$. It follows that $\G(k)$ is free.
\end{proof}

Note that the modular surface $\Hyp^2/\pslz$ has exactly one cusp. As an element of the orbifold fundamental group of the modular surface,
the free homotopy class of the cusp is generated by the matrix \[\begin{pmatrix} 1&1\\ 0&1\end{pmatrix}.\]

\begin{lem}\label{lem:cusp}
Let $k\geq 2$. The hyperbolic manifold $\Hyp^2/\G(k)$ has at least three cusps.
\end{lem}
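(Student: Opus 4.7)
The plan is to count cusps directly as $\G(k)$-orbits on the set of parabolic fixed points $\Q \cup \{\infty\} \subset \partial\Hyp^2$. First I would use the transitivity of the $\pslz$-action on $\Q \cup \{\infty\}$ together with the normality $\G(k) \trianglelefteq \pslz$ to rewrite the number of orbits as the index $[\pslz : \G(k) \cdot P_\infty]$, where $P_\infty = \mathrm{Stab}_{\pslz}(\infty)$ is the infinite cyclic group generated by the image of $T = \begin{pmatrix} 1 & 1 \\ 0 & 1\end{pmatrix}$.

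Next I would verify that $P_\infty \cap \G(k) = \langle T^k \rangle$ for every $k \geq 2$; for $k \geq 3$ this is immediate since $-I \notin \G(k)$ prevents any $-T^n$ from entering $\G(k)$, while the case $k = 2$ requires a small check because $-I \in \G(2)$. Consequently the number of cusps equals $[\pslz : \G(k)]/k$.

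Finally I would plug in the classical formula $[\pslz : \G(k)] = \tfrac{1}{2} k^3 \prod_{p \mid k}(1 - 1/p^2)$ for $k \geq 3$ and $[\pslz : \G(2)] = |\mathrm{PSL}_2(\Z/2\Z)| = 6$, giving exactly $3$ cusps for $k = 2$, exactly $4$ for $k = 3$, and for $k \geq 4$ the uniform Euler-product bound $\prod_{p \mid k}(1 - 1/p^2) \geq 6/\pi^2 > 1/2$ yields at least $k^2/4 \geq 4$ cusps. The main thing to watch is the factor of two coming from $\pm I$ in the case $k = 2$; beyond that, the argument is a direct orbit count and no serious obstacle is anticipated.
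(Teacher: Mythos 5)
Your argument is correct, and it rests on the same counting principle as the paper's: since $\G(k)$ is normal in $\pslz$, the cusps correspond to cosets of $\G(k)P_\infty$, so the number of cusps is the index $[\pslz:\G(k)]$ divided by the cusp width $k$, and your identification $P_\infty\cap\G(k)=\langle T^k\rangle$ (including the $\pm I$ subtlety at $k=2$) is right. Where you diverge is in handling composite levels. The paper first reduces to prime level: for a prime $p\mid k$ one has $\G(k)<\G(p)$, so $\Hyp^2/\G(k)$ covers $\Hyp^2/\G(p)$ and has at least as many cusps; after that it only needs the order of $\mathrm{PSL}_2(\Z/p\Z)$ and the fact that $T$ has order exactly $p$ in that quotient, giving exactly $(p^2-1)/2$ cusps for odd $p$ and $3$ cusps for $p=2$. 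You instead treat all $k$ at once, quoting the classical index formula $[\pslz:\G(k)]=\tfrac12 k^3\prod_{p\mid k}(1-p^{-2})$ for $k\geq 3$ (and $[\pslz:\G(2)]=6$) and then using the Euler-product bound $\prod_{p\mid k}(1-p^{-2})\geq 6/\pi^2>1/2$ to get at least $k^2/4\geq 4$ cusps for $k\geq 4$, with the exact values $3$ and $4$ at $k=2,3$. Your route requires the index formula as an input, but in exchange it yields the exact cusp count $\tfrac12 k^2\prod_{p\mid k}(1-p^{-2})$ at every level, which is more than the lemma demands; the paper's prime reduction is lighter on prerequisites and suffices for the stated lower bound of three.
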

\begin{proof}
We may reduce to the case where $k$ is a prime, since if $p|k$ is a prime divisor of $k$ then $\G(k)<\G(p)$. The map
\[\pslz\to\mathrm{PSL}_2(\Z/p\Z)\] is surjective (as can be deduced from examining generating sets of $\mathrm{PSL}_2(\Z/p\Z)$),
and its image has order \[p(p+1)(p-1)/2\] when $p$ is odd and order $6$ when $p=2$. The order of the matrix
\[\begin{pmatrix} 1&1\\ 0&1\end{pmatrix}\] in $\mathrm{PSL}_2(\Z/p\Z)$ is exactly $p$, so that we see that there are exactly
\[(p+1)(p-1)/2\] distinct cusps in $\Hyp^2/\G(p)$ when $p\geq 3$ and exactly three cusps when $p=2$. The lemma now follows.
\end{proof}

The following consequence is immediate from elementary surface topology:

\begin{cor}\label{cor:homology}
If $k\geq 2$, then any loop about a cusp in $\Hyp^2/\G(k)$ represents a nontrivial integral homology class.
Moreover, two loops about distinct cusps
in $\Hyp^2/\G(k)$ represent distinct integral homology classes.
\end{cor}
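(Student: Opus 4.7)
The plan is to reduce both assertions to a standard homological computation on a compact surface with boundary. By Lemma \ref{lem:pctf}, $\Gamma(k)$ is free, and since $[\pslz : \Gamma(k)] < \infty$ while $\Hyp^2/\pslz$ has finite area, the quotient $S := \Hyp^2/\Gamma(k)$ is a finite area hyperbolic surface. It is therefore homeomorphic to a closed orientable surface $\bar{\Sigma}_g$ with finitely many punctures removed, and under the standard deformation retraction it has the homotopy type of a compact surface with boundary $\Sigma_{g,n}$, where a loop around the $i$-th cusp corresponds to the $i$-th boundary component $\partial_i$. By Lemma \ref{lem:cusp}, we have $n \geq 3$.

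Next, I would invoke the standard presentation of $\pi_1(\Sigma_{g,n})$ coming from the $(4g+n)$-gon: after abelianization, $H_1(\Sigma_{g,n};\Z)$ is free abelian of rank $2g+n-1$, and the boundary classes $[\partial_1],\dots,[\partial_n]$ (oriented compatibly with $\Sigma_{g,n}$) are subject to the single relation
\[
[\partial_1] + [\partial_2] + \cdots + [\partial_n] = 0,
\]
while the $2g+n-1$ classes obtained by omitting any one of the $[\partial_i]$ (together with the standard handle generators $\{a_l, b_l\}_{l=1}^g$) form a $\Z$-basis.

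To verify nontriviality of each $[\partial_i]$, I would omit some $[\partial_j]$ with $j \neq i$ from the full collection; the remaining classes include $[\partial_i]$ as part of a $\Z$-basis, so $[\partial_i] \neq 0$. To distinguish $[\partial_i]$ from $[\partial_j]$ when $i \neq j$, the inequality $n \geq 3$ furnishes some index $k \notin \{i,j\}$; omitting $[\partial_k]$ yields a basis that contains both $[\partial_i]$ and $[\partial_j]$ as distinct basis elements, so they cannot coincide in the free abelian group $H_1(\Sigma_{g,n};\Z)$.

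The argument is entirely topological and presents no essential obstacle; the only mild point to keep track of is the orientation convention for the boundary loops, which is fixed once and for all by the ambient orientation on $S$ induced from $\Hyp^2$.
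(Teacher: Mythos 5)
Your argument is correct and is precisely the ``elementary surface topology'' the paper leaves implicit: the finite-area surface $\Hyp^2/\G(k)$ is the interior of a compact surface $\Sigma_{g,n}$ whose boundary classes satisfy the single relation $\sum_i[\partial_i]=0$, and the bound $n\geq 3$ from Lemma~\ref{lem:cusp} lets you place any one or any two of these classes inside a $\Z$-basis, giving nontriviality and distinctness. No gaps; this is the same route the paper intends.
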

\begin{proof}
For all $k\geq 2$, we have that $\Sigma=\Hyp^2/\G(k)$ has at least three cusps, as follows from Lemma~\ref{lem:cusp}.
Choose a bi-infinite, simple,
oriented geodesic $\gamma$ on $\Sigma$ that travels between two of the cusps of $\Sigma$. If $K$ denotes a small
neighborhood of the union of the cusps, we have that \[H^1(\Sigma,K,\bZ)\cong H_1(\Sigma,\bZ),\] by relativized Poincar\'e duality
(i.e.~Poincar\'e--Lefschetz duality),
and algebraic intersection with $\gamma$ represents a nonzero cohomology class in $H^1(\Sigma,K,\bZ)$. If $\delta$ is a small
oriented loop around one of the cusps of $\Sigma$, then the algebraic intersection pairing of $\delta$ and $\gamma$ is $\pm1$,
so that in particular the homology class $[\delta]\in H_1(\Sigma,\bZ)$ is nonzero. Since $\gamma$ connected an arbitrary
pair of cusps, the first claim of the corollary follows.

If $\delta_1$ and $\delta_2$ encircle two distinct cusps of $\Sigma$, then since there are three distinct cusps in $\Sigma$,
one can choose two geodesics $\gamma_1$ and $\gamma_2$ as above so that
$\gamma_i$ pairs nontrivially only with $\delta_i$ under the Poincar\'e duality pairing, for $i\in\{1,2\}$. This shows that $\delta_1$ and
$\delta_2$ represent distinct integral homology classes, whence the second claim of the corollary follows.
\end{proof}

\subsection{Powers and discreteness}

We establish the following
relatively straightforward fact about discrete subgroups of $\pslr$ which will be useful at the end of the proof of our main result:

\begin{lem}\label{lem:square-discrete}
Let $H$ be a Zariski dense subgroup of $\pslr$ and suppose that there is a finitely generated discrete subgroup $\G<\pslr$ and an $N>0$
such that for all $h\in H$,
we have $h^N\in\G$. Then $H$ is discrete.
\end{lem}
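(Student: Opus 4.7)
The plan is a direct argument by contradiction. If $H$ fails to be discrete, then by homogeneity there exists a sequence $h_n\in H$ with $h_n\to e$ and $h_n\neq e$. The hypothesis forces each $h_n^N$ to lie in $\G$; continuity of the $N$-th power map gives $h_n^N\to e$, and since $\G$ is discrete the identity is isolated in $\G$, so $h_n^N=e$ for all sufficiently large $n$. Thus $h_n$ is eventually a nontrivial element of order dividing $N$ in $\pslr$.

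The next step is to rule this out using the structure of elliptic torsion in $\pslr$. Every nontrivial element of order dividing $N$ in $\pslr$ is elliptic, and its lifts to $\mathrm{SL}_2(\R)$ have finite order dividing $2N$. Such a lift has trace of the form $2\cos(\pi m/d)$ for some divisor $d$ of $2N$ with $d\geq 2$ and some integer $0<m<d$. These traces form a finite subset of $(-2,2)$, hence have absolute value uniformly bounded away from $2$.

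Finally, one lifts $h_n$ to $\til h_n\in\mathrm{SL}_2(\R)$ in a neighborhood of $I$, which is possible because the covering $\mathrm{SL}_2(\R)\to\pslr$ is a local homeomorphism and $h_n\to e$. Then $\til h_n\to I$ forces $\tr(\til h_n)\to 2$, contradicting the trace bound in the previous step. This contradiction proves discreteness of $H$.

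The only nontrivial content is the trace bound in Step 2; everything else is a short topological argument, so I expect no serious obstacle. It is perhaps worth noting that neither the Zariski density of $H$ nor the finite generation of $\G$ appears to enter the argument; these hypotheses presumably reflect the ambient application context of the main theorem rather than a deeper need, and one could imagine a slightly stronger statement in which only discreteness of $\G$ (and the uniform power bound) is assumed.
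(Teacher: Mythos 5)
Your proof is correct, and it takes a genuinely different route from the paper's. The paper argues globally: if $H$ is not discrete, then Zariski density of $H$ together with simplicity of $\pslr$ forces the closure of $H$ to be all of $\pslr$, so traces of elements of $H$ are dense in $(-2,2)$; this produces either an elliptic element of infinite order (whose $N$-th power then violates discreteness of $\G$) or elliptic elements of arbitrarily large finite order, which are ruled out by passing to a torsion-free finite index subgroup $\G_0<\G$ via Selberg's Lemma --- and this is exactly where finite generation of $\G$ is used. Your argument is instead local at the identity: nontrivial $h_n\to e$ give $h_n^N\to e$ inside the discrete group $\G$, hence $h_n^N=e$ eventually, and the identity of $\pslr$ is isolated among nontrivial elements of order dividing $N$, because their lifts to $\mathrm{SL}_2(\R)$ have order dividing $2N$ and hence trace of the form $2\cos(\pi m/d)$ with $d\mid 2N$, $0<m<d$ --- a finite subset of $(-2,2)$ bounded away from $\pm2$ --- whereas lifts chosen near $I$ have trace tending to $2$. (The bound applies to either lift, so the choice of lift is immaterial.) Your route is more elementary (no density upgrade, no Selberg) and, as you correctly observe, proves a stronger statement in which only discreteness of $\G$ and the uniform power condition are needed; the Zariski density and finite generation hypotheses in the lemma are artifacts of the paper's global argument and of the intended application, where $H$ is automatically Zariski dense and $\G$ is commensurable with $\pslz$.
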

\begin{proof}
If $H$ fails to be discrete then its topological
closure $\overline{H}$ must have positive dimension. Since $\pslr$ is simple and since $H$ is Zariski dense,
 we have that $\overline{H}$ is necessarily
equal to $\pslr$. It follows then that $H$ must be topologically dense in $\pslr$. Since the condition $|\tr A|<2$ is open for $A\in\pslr$, it
follows that $H$ must contain elements whose traces form a dense subset of $(-2,2)$. It follows that $H$ either contains an elliptic element
of infinite order or elliptic elements of arbitrarily high orders. In the first case, we obtain that $\G$ also contains an elliptic element of infinite
order, violating the discreteness of $\G$.

In the second case, choose a finite index subgroup $\G_0<\G$ which is torsion-free,
which exists by Selberg's Lemma~\cite{Raghunathan1972}. We therefore have a torsion element $A\in H$ and a power of $A$ that is
nontrivial and which lies in $\G_0$, a contradiction.
\end{proof}

\subsection{Commensurations of $\pslz$ and rational matrices}

In this section we record the following easy observation to which we have alluded already:

\begin{lem}\label{lem:rat-value}
Let \[g\in\pslq\sqQ=\comm(\pslz),\] and let $A\in\pslz$. Then $A^g\in\pslq$.
\end{lem}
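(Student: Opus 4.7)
The plan is to reduce the claim to a single direct computation using the explicit description of $\pslq\sqQ$ given in the preamble. By definition, any $g \in \pslq\sqQ$ admits a representative in $\mathrm{SL}_2(\R)$ of the form $g = D_q \cdot B$, where
\[D_q = \begin{pmatrix} \sqrt{q} & 0 \\ 0 & \sqrt{q^{-1}} \end{pmatrix}\]
for some $q \in \Q^\times$ and $B \in \slq$. This follows directly from the normalization displayed in the paragraph preceding the statement of $\comm(\pslz) = \pslq\sqQ$.

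Given this factorization, I would simply expand
\[g^{-1} A g \;=\; B^{-1}\bigl(D_q^{-1} A \, D_q\bigr) B.\]
Since $B^{\pm 1} \in \slq$, it suffices to show that $D_q^{-1} A D_q$ has rational entries. But $D_q$ is diagonal, so conjugation by $D_q$ fixes the diagonal entries of $A$ and scales the off-diagonal entries by $q^{\pm 1}$; both scalars lie in $\Q$, and $A \in \slz$, so every entry of $D_q^{-1} A D_q$ lies in $\Q$. Hence $A^g \in \pslq$.

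There is no real obstacle here: the content of the lemma is entirely the bookkeeping around the decomposition $\pslq\sqQ = \pslq \cdot \langle D_q \mid q \in \Q \rangle$ and the observation that a diagonal $\sqQ$-twist of an integer matrix still has rational entries. The one subtlety worth flagging is that the identification of $g$ with the matrix $D_q B$ is only well-defined up to $\pm I$; this is harmless since $\pm I$ centralizes everything and we are working projectively throughout.
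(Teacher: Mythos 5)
Your proof is correct and takes essentially the same approach as the paper: factor a representative of $g$ as a ``square-root'' factor times a matrix with rational entries, and observe rationality is preserved under conjugation. The paper uses the scalar decomposition $g=\sqrt{q}\,B$ with $B\in\mathrm{GL}_2(\Q)$, so that the $\sqrt{q}\,I$ factor is central and the conjugation immediately collapses to $B^{-1}AB$; you use the determinant-one decomposition $g=D_q B$ with $B\in\slq$, which costs you one extra (trivial) line checking that conjugation by the diagonal $D_q$ scales the off-diagonal entries by $q^{\pm1}\in\Q$. Both decompositions are stated in the paper's preamble, and the two arguments are interchangeable.
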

What is technically meant by the statement of Lemma~\ref{lem:rat-value} is that for any representatives of $A$ and $g$ in
$\mathrm{SL}_2(\R)$, the
corresponding matrix $A^g$ will have rational entries.

\begin{proof}[Proof of Lemma~\ref{lem:rat-value}]
Choose a representative for $A$ in $\mathrm{SL}_2(\Z)$ and a representative $g=\sqrt{q}B$, where $q\in\Q$ and where
$B\in\mathrm{GL}_2(\Q)$. It is immediate that $B^{-1}AB$ has rational entries. Since the representative for $g$ differs from
$B$ by a scalar multiple of the identity, we see that $A^g=B^{-1}AB$ has rational entries.
\end{proof}

The fact that the commensurator of $\pslz$ requires the adjunction of square roots
and is not simply $\pslq$ is at times an annoying issue.
We note that the occurrence of square roots is fundamentally a vestige of the
fact that the center of
$\mathrm{SL}_2(\R)$ is nontrivial.

We remark that it is possible to avoid the appearance of square roots by working inside of other Lie groups.
For instance, one can consider
$\pslz$ as the group of
integer points of the special orthogonal group $\mathrm{SO}^+(f)$, where  \[f=xz-y^2.\] In this case,
$\mathrm{SO}(2,1)$ has no center,
and a general result of Borel~\cite{Borel66} implies that the commensurator of the integral points is simply the
group of rational points
$\mathrm{SO}^+(f,\Q)$. Thus, one avoids the complications resulting from square roots.

The cost of passing to the $3\times 3$ matrix
group is that for the upper half plane model or disk model of $\Hyp^2$, the group $\mathrm{SO}^+(f,\Q)$ is less intuitive and calculations are
more complicated. The relative simplicity of calculating inside of $\pslq$, together with the particular properties of $2\times 2$ matrices,
 will be an important feature of the arguments in the sequel,
especially in
the proof of Proposition~\ref{reid} and in
Section~\ref{sec:homology}. So, the potential gain from using a center--free Lie group is outweighed by computational and conceptual
complications later on. We therefore opt to retain the $2\times 2$ matrix group setting for the purposes of this article.

\section{Invariance under Commensuration}
Recall the notation that $\Phi(H)$ is a term of the lower central series or derived series of $H$. If $\Phi(H)\neq H$ then we sometimes
say that $\Phi(H)$ is a \emph{proper term}.
In this section, we prove that elements \[g\in\comm(\Phi(H))\] satisfying certain special properties
must lie in $\pslz$.

\subsection{Normalization of Zariski dense subgroups}\label{sec:invariant}

\begin{lem}\label{normalizer}
Let $\GG$ be a simple real Lie group and $\Ga \subset \GG$ be a Zariski dense discrete subgroup.
Then the normalizer $N_\GG(\Ga)$ is discrete.
\end{lem}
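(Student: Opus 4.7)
\medskip

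\noindent\textbf{Proof plan for Lemma~\ref{normalizer}.}
The plan is to show that the identity component $N^0$ of $N := N_\GG(\Ga)$ is trivial; since $N$ is a closed subgroup of the Lie group $\GG$, this will force $N$ to be $0$-dimensional, hence discrete. The key point is that Zariski density of $\Ga$ allows one to bootstrap from centralizing $\Ga$ to centralizing all of $\GG$.

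First I would fix $\gamma \in \Ga$ and consider the continuous map $\phi_\gamma \colon N^0 \to \GG$ defined by $\phi_\gamma(g) = g\gamma g^{-1}$. By definition of the normalizer, $\phi_\gamma$ takes values in $\Ga$, which is discrete in $\GG$. Since $N^0$ is connected and $1 \in N^0$ maps to $\gamma$, the image must be the singleton $\{\gamma\}$. Thus $g\gamma g^{-1} = \gamma$ for every $g \in N^0$ and every $\gamma \in \Ga$, i.e.\ $N^0$ centralizes $\Ga$ pointwise.

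Next, for each fixed $h \in N^0$, the centralizer
\[
Z_\GG(h) = \{x \in \GG : xh = hx\}
\]
is a Zariski closed subgroup of $\GG$. The previous step shows $\Ga \subset Z_\GG(h)$, and since $\Ga$ is Zariski dense in $\GG$ we conclude $Z_\GG(h) = \GG$, i.e.\ $h \in Z(\GG)$. Hence $N^0 \subset Z(\GG)$.

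Finally, since $\GG$ is a simple Lie group, its center $Z(\GG)$ is a discrete (in fact $0$-dimensional) subgroup. The connected subgroup $N^0$ sits inside this discrete group, so $N^0 = \{1\}$, and therefore $N = N_\GG(\Ga)$ is discrete. The main (minor) subtlety is the first step, where one must keep track of the fact that the relevant continuous family $\phi_\gamma$ lands in a discrete set; aside from that, the argument is essentially a one-line consequence of Zariski density plus simplicity.
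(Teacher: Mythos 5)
Your argument is correct, and it takes a genuinely different route from the paper's. The paper argues globally: Zariski density passes from $\Ga$ to $N_\GG(\Ga)$, so if the normalizer were not discrete it would be a positive-dimensional Zariski-dense closed subgroup of the simple group $\GG$ and hence all of $\GG$; but then $\Ga$ would be a nontrivial Zariski-dense (discrete, hence proper) normal subgroup of $\GG$, contradicting simplicity. You instead work locally at the identity: the connected component $N^0$ must centralize the discrete group $\Ga$ (a continuous conjugation map into a discrete set is constant), Zariski density of $\Ga$ then pushes $N^0$ into $Z(\GG)$ because centralizers are Zariski closed, and simplicity enters only through the discreteness of the center. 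Your version avoids the paper's intermediate claim that a positive-dimensional Zariski-dense Lie subgroup of a simple Lie group is everything (a standard fact, but one whose usual proof---the Lie algebra of the identity component is an ideal---is about as long as your whole argument), at the cost of invoking the Zariski-closedness of centralizers; the paper's version is shorter on the page and yields the slightly stronger dichotomy that a non-discrete normalizer would have to be all of $\GG$. One small point you assert without proof is that $N_\GG(\Ga)$ is closed, which you need in order to speak of its identity component as a Lie subgroup; this is immediate since a discrete subgroup of $\GG$ is closed and the normalizer of a closed subgroup is closed, but it deserves a sentence.
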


We restrict to real Lie groups, since there is a complication for complex Lie groups; namely, the unit complex numbers are Zariski dense
in $\C$, and are a positive dimensional closed subgroup of $\C$. In the case of complex Lie groups, one needs to consider unbounded Zariski
dense subgroups in order to avoid issues like this one. Since we are primarily interested in $\pslr$, we will simply rule out
the generality of arbitrary complex Lie groups.

\begin{proof}[Proof of Lemma~\ref{normalizer}]
Since $\Ga \subset \GG$ is Zariski dense, so is $N_\GG(\Ga)$. If $N_\GG(\Ga)$ is not discrete, then it must be all of $\GG$, since a positive dimensional Zariski dense Lie subgroup of a simple real Lie group $\GG$ is necessarily all of $\GG$. This then
 forces $\GG$ to admit a non-trivial Zariski dense normal subgroup $\Ga$, contradicting simplicity  of $\GG$. Hence $N_\GG(\Ga)$ is  discrete.
\end{proof}

As a consequence, we have the following:
\begin{lem}\label{lem:invariant}
Let $H\lhd\pslz$ be a finite index normal subgroup. Suppose that $g\in\pslr$ satisfies
$H^g=H$. Then $g\in\pslz$. More generally, if $\Phi(H)=\Phi(H)^g$ then $g\in\pslz$.
\end{lem}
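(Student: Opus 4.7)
The plan is to deduce the lemma essentially as a corollary of Lemma \ref{normalizer}, together with the classical fact (due to Siegel) that $\pslz$ realizes the minimum covolume among non-cocompact Fuchsian lattices in $\pslr$. The argument has three short steps.

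First I would verify that $\Phi(H)$ is Zariski dense in $\pslr$, so that Lemma \ref{normalizer} applies. Since $H$ has finite index in the lattice $\pslz$, $H$ is itself a lattice, hence Zariski dense in the simple Lie group $\pslr$. Its nontrivial normal subgroup $\Phi(H)$ is therefore non-elementary, and non-elementary subgroups of $\pslr$ are automatically Zariski dense (their Zariski closures cannot sit inside any proper Lie subgroup of $\pslr$). Lemma \ref{normalizer} then yields that $N := N_{\pslr}(\Phi(H))$ is discrete; the identical argument applied directly to $H$ gives discreteness of $N_{\pslr}(H)$.

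Next I would show $\pslz \subseteq N$. Each term of the lower central series or derived series of $H$ is characteristic in $H$; since $H$ is normal in $\pslz$, conjugation by any $\gamma \in \pslz$ restricts to an automorphism of $H$, which necessarily preserves the characteristic subgroup $\Phi(H)$. Hence every $\gamma \in \pslz$ lies in $N$. Consequently $N$ is a discrete subgroup of $\pslr$ containing the lattice $\pslz$, so $N$ is itself a lattice with $[N : \pslz] < \infty$, and $N$ is non-cocompact since it inherits the parabolic elements (e.g., the image of $\begin{pmatrix} 1 & 1 \\ 0 & 1 \end{pmatrix}$) of $\pslz$.

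Finally, Siegel's minimum-area theorem gives $\mathrm{covol}(N) \geq \pi/3 = \mathrm{covol}(\pslz)$; combined with $\mathrm{covol}(N) = \mathrm{covol}(\pslz)/[N : \pslz]$, this forces $[N : \pslz] = 1$, so $N = \pslz$. Therefore $g \in \pslz$, as required. The first assertion ($H^g = H \Rightarrow g \in \pslz$) follows by the same argument with $H$ in place of $\Phi(H)$. The main conceptual step is invoking Siegel's minimum-area result; if one preferred to avoid it, one could instead exploit the fact that $\mathrm{Out}(\pslz) \cong \Z/2$ is generated by an orientation-reversing involution, but bootstrapping this from $N_{\pslr}(\pslz)$ to $N_{\pslr}(\Phi(H))$ appears to require more work than the direct covolume argument.
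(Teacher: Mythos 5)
Your argument is correct, and its skeleton coincides with the paper's: Zariski density of $\Phi(H)$ together with Lemma~\ref{normalizer} gives discreteness of the normalizer, and the characteristic-in-normal observation places $\pslz$ inside it. The only point of divergence is how you force the resulting discrete overgroup of $\pslz$ to equal $\pslz$. The paper does this with a self-contained geometric maximality argument: $\Hyp^2/\pslz$ admits no nontrivial orientation-preserving isometries, because its quotient by the reflection symmetry is the $(2,3,\infty)$ Coxeter triangle orbifold whose triangle fundamental domain has no symmetry, so no further quotients exist. You instead invoke Siegel's $\pi/3$ lower bound on the covolume of non-cocompact Fuchsian lattices and the multiplicativity of covolume in the index, noting that the overgroup retains the parabolics of $\pslz$. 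Both are standard expressions of the maximality of $\pslz$ among discrete subgroups of $\pslr$; your route is quicker to state but imports Siegel's theorem as a black box, while the paper's is elementary and explicit. Your extra care in justifying Zariski density of $\Phi(H)$ (nontrivial normal subgroups of the non-elementary group $H$ are non-elementary, hence Zariski dense) is sound and in fact slightly more detailed than the paper's treatment.
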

\begin{proof} Let $\GG = \pslr$. Since $\pslr$ is simple and since $H$ is Zariski dense, it follows by Lemma \ref{normalizer} that
$N_\GG(H)$ is  discrete. We have that
	$H_1= \langle g,H\rangle$ satisfies $H_1 \subset N_\GG(H)$ and hence $H_1$ is discrete.

Since $H$ is normal in $\pslz$, we have \[\pslz<N_\GG(H).\]
It follows that $\langle g,\pslz\rangle$ is a discrete subgroup of
$\pslr$, by Lemma \ref{normalizer}. It follows that $\langle g,\pslz\rangle$ is a discrete group of orientation preserving isometries of
$\Hyp^2$.
A standard fact from hyperbolic geometry says that \[X=\Hyp^2/\pslz\] admits no further nontrivial orientation preserving isometries
and is therefore a minimal orbifold. Indeed, let $\iota$ be an orientation preserving involution of $X$, so that $X/\langle \iota\rangle$ is
the $(2,3,\infty)$ triangular orbifold $\TT$. The orbifold $\TT$ has no symmetries, and thus $X$ admits no orientation preserving symmetries,
as these would have descended to $\TT$. Consequently, we see that if \[\langle g,\pslz\rangle<\pslr\] is discrete then $g\in\pslz$.

The proof in the case where \[\Phi(H)=\Phi(H)^g\] is identical, since $\Phi(H)$ is Zariski dense in $\pslr$ and characteristic in $H$, and hence
normalized by $\pslz$.
\end{proof}

We note that it is at this point that we use the normality of $H$ in $\pslz$. If $H$ were not normal then we could pass to a finite-index subgroup
of $H$ which was normal, though then $g$ might no longer normalize $H$.


\subsection{An application of Chevalley-Weil theory}\label{sec:gaschutz}

In this section, we prove the following lemma:

\begin{lem}\label{lem:almost-inv}
Let $g\in\pslr$, let $H<\pslz$ have finite index, and suppose that there is a free group $F<\pslr$ such that $H,H^g<F$.
Suppose furthermore that $H$ is normal in $F$.
If $H^g\neq H$ then $g$ does not
commensurate $\Phi(H)$, provided that $\Phi(H)$ is proper.
\end{lem}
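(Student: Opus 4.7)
The plan is to argue by contradiction via the rigidity of the Chevalley--Weil/Gasch\"utz decomposition of the $\Q[Q]$-module $H^{ab}\otimes\Q$, where $Q=F/H$. Assume $g\in\comm(\Phi(H))$ yet $H^g\ne H$, and aim to produce a nontrivial element of $Q$ whose action on $H^{ab}\otimes\Q$ violates the Gasch\"utz structure.

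I would first reduce to the case $\Phi(H)=[H,H]$. Every proper term of the lower central or derived series is contained in $[H,H]$, so commensuration of $\Phi(H)$ by $g$ transfers (via the successive quotients of the relevant series) to a commensurability statement at the abelianization level. Since $[H,H]$ is characteristic in $H\triangleleft F$, $[H,H]\triangleleft F$, and one obtains an extension
\[
1\longrightarrow H^{ab}\longrightarrow \widetilde F := F/[H,H]\longrightarrow Q\longrightarrow 1.
\]
The image $\widetilde{H^g}$ of $H^g$ in $\widetilde F$ is commensurable with $H^{ab}$, because $L := H\cap H^g$ has finite index in both. Commensuration of $[H,H]$ by $g$ makes the image of $[H^g,H^g]=[H,H]^g$ in $\widetilde F$ finite, so $\widetilde{H^g}$ is virtually abelian, with a finite-index normal abelian subgroup $\widetilde A$ whose intersection with $H^{ab}$ is a full-rank sublattice.

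Invoking Gasch\"utz's theorem for the free cover $H\triangleleft F$ (with $n=\mathrm{rank}(F)$),
\[
H^{ab}\otimes_\Z\Q\;\cong\;\Q\oplus\Q[Q]^{\,n-1}
\]
as $\Q[Q]$-modules, so the $Q$-action on $H^{ab}\otimes\Q$ is faithful. Then, for any $h\in H^g$, the conjugation of $H^{ab}$ by $\tilde h^N$ (where $N=[\widetilde{H^g}:\widetilde A]$) coincides with the action of $\phi(h)^N\in Q$ under $\phi\colon F\to Q$, and fixes the full-rank sublattice $\widetilde A\cap H^{ab}$ pointwise; this forces $\phi(h)^N=1$ in $Q$. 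Hence $\phi(H^g)\le Q$ has exponent dividing $N$, and choosing $h\in H^g$ with sufficiently large order in $\phi(H^g)$ — combined with either sharp quantitative control over $N$, or an iteration that descends to deeper finite-index normal subgroups of $H$ inside $F$ and restarts the analysis — yields the desired contradiction, forcing $\phi(H^g)=\{1\}$, i.e., $H^g\le H$, and hence $H^g=H$ by index considerations.

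The main obstacle is this last quantitative step: making precise how the exponent bound $N$ suffices to conclude $\phi(H^g)=\{1\}$. Either one must show that $\widetilde{H^g}$ is outright abelian (so that $N=1$), which may follow from a stronger use of the commensuration hypothesis combined with the structure of $\widetilde F$, or one must iterate the construction along a chain of deeper finite-index normal subgroups inside $F$, arranging for the exponent bound to drop below the order of every nontrivial element of $\phi(H^g)$. A secondary subtlety is the initial reduction from an arbitrary proper term of the series to the abelianization, which requires careful bookkeeping on the finite quotient $[H,H]/\Phi(H)$.
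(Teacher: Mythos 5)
Your overall framework --- Chevalley--Weil/Gasch\"utz applied to $H_1(H,\Q)$ as a $\Q[Q]$-module for $Q=F/H$, working inside $\widetilde F=F/[H,H]$ --- is the same tool the paper uses, but as written the argument has two genuine gaps. The first is in your main line: from the virtually abelian structure of $\widetilde{H^g}$ you only extract that conjugation by $\tilde h^{N}$ fixes a full-rank sublattice, hence $\phi(h)^N=1$ in $Q$; but $Q$ is a finite group, so an exponent bound on $\phi(H^g)$ carries no information whatsoever and cannot force $\phi(H^g)=\{1\}$ (i.e.\ $H^g\le H$), which is the contradiction you need --- and you acknowledge this obstacle without resolving it. The way to close it is to use faithfulness on a single element rather than on powers: pick $a\in H^g\setminus H$ (possible since $H$ and $H^g$ are distinct isomorphic subgroups of the same finite index in $F$, so neither contains the other), use the Gasch\"utz decomposition to find $z\in H_1(H,\Q)$ with $\phi(a)\cdot z\neq z$, choose $b\in H\cap H^g$ whose class is a nonzero integral multiple of $z$, and set $x=[a,b]$. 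Then $x\in (H^g)'\cap H$ by normality of $H$, and its image in $H^{ab}$ is $\phi(a)\cdot[b]-[b]\neq 0$, hence of infinite order; so the image of $(H^g)'$ in $F/H'$ is infinite, contradicting the finiteness that your own first step correctly derived from commensuration of the derived subgroups. This commutator construction is precisely the content of Lemma~\ref{lem:gaschutz}; the ``sharp quantitative control over $N$'' or descent iteration you gesture at does not substitute for it.

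The second gap is your reduction from an arbitrary proper term $\Phi(H)$ to the derived subgroup. You describe the needed bookkeeping as taking place on ``the finite quotient $[H,H]/\Phi(H)$,'' but this quotient is infinite whenever $\Phi(H)\subsetneq [H,H]$ (for a nonabelian free group, $\gamma_2/\gamma_3$ is already infinite free abelian, and $D_2/D_3$ is infinitely generated), and commensuration of a deeper term does not formally transfer to commensuration of $[H,H]$. Covering the general terms is real work: the paper's Lemma~\ref{lem:filtration} propagates the witness $x\in A'\cap(B\setminus B')$ down both series by iterated commutators with elements carrying nontrivial homology, using torsion-freeness of $\gamma_n/\gamma_{n+1}$ and of $H_1(D_i(B),\Z)$. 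You would need an argument of that kind to handle $\Phi(H)\neq H'$; as stated, your proposal proves (modulo the first gap) only the derived-subgroup case.
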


For Lemma~\ref{lem:almost-inv}, the ambient group $\pslr$ is irrelevant. We have the following general fact, from which
Lemma~\ref{lem:almost-inv} will follow with some more work.

\begin{lem}\label{lem:gaschutz}
Let $F$ be a finitely generated free group of rank at least two,
and let $K_1,K_2<F$ be distinct, isomorphic, finite index subgroups. Suppose furthermore that
$K_2$ is normal in $F$.
Then $K_1'$ and $K_2'$ are not
commensurable. That is to say, $K_1'\cap K_2'$ has infinite index in $K_1'$. If $K_1$ is also normal in $F$, then $K_1'\cap K_2'$
has infinite index in
$K_2'$ as well.
\end{lem}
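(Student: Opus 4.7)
The plan is to argue by contradiction: assume $K_1' \cap K_2'$ has finite index in $K_1'$ and deduce $K_1 = K_2$, contradicting $K_1 \neq K_2$. Since the hypothesis concerns commutators in $K_1$ seen modulo $K_2'$, the natural first move is to pass to the quotient $F/K_2'$, which fits into an extension
\[
1 \to K_2^{\mathrm{ab}} \to F/K_2' \to Q \to 1, \qquad Q := F/K_2,
\]
where $Q$ is finite, $K_2^{\mathrm{ab}} = K_2/K_2'$ is free abelian (as $K_2$ is free), and $Q$ acts on $K_2^{\mathrm{ab}}$ by the natural conjugation action (well-defined because $K_2'$ is characteristic in $K_2$, hence normal in $F$).

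Set $L := K_1 \cap K_2$ and $H := K_1K_2/K_2 \leq Q$. Because $K_2 \triangleleft F$, conjugation by any element of $K_1$ preserves $L = K_1 \cap K_2$, so its image $\bar L$ in $K_2^{\mathrm{ab}}$ is an $H$-invariant subgroup of finite index. The image of $K_1$ in $F/K_2'$ then fits into
\[
1 \to \bar L \to K_1K_2'/K_2' \to H \to 1,
\]
and a short computation of commutators modulo $K_2'$ identifies the commutator subgroup of $K_1 K_2'/K_2'$ with $K_1' K_2'/K_2' \cong K_1'/(K_1' \cap K_2')$. Under the standing assumption this is finite, so $K_1 K_2'/K_2'$ is virtually abelian.

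The next step extracts triviality of the $H$-action on $\bar L$. For any $g \in K_1 K_2'/K_2'$ projecting to $h \in H$ and any $a \in \bar L$, the commutator $[g,a]$, which in the additive notation of $\bar L$ reads $h \cdot a - a$, lies in the finite group $[K_1K_2'/K_2',K_1K_2'/K_2']$; but it also sits in the torsion-free group $K_2^{\mathrm{ab}}$, so it vanishes. Hence $H$ acts trivially on $\bar L$, and therefore on $\bar L \otimes \Q = K_2^{\mathrm{ab}} \otimes \Q$ (equality because $\bar L$ has finite index). Invoking Chevalley--Weil for the finite-index normal subgroup $K_2 \triangleleft F$ of a rank-$n$ free group gives
\[
K_2^{\mathrm{ab}} \otimes \Q \;\cong\; \Q[Q]^{\,n-1} \oplus \Q
\]
as $\Q[Q]$-modules. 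Since $n \geq 2$, a copy of the regular representation $\Q[Q]$ appears, and the restricted left-multiplication action of $H$ on $\Q[Q]$ is faithful, so triviality forces $H = 1$. Thus $K_1 \subseteq K_2$, and combining with $K_1 \cong K_2$ and the Schreier index formula gives $K_1 = K_2$, the desired contradiction.

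The main obstacle I anticipate is organizing the passage from ``$K_1 K_2'/K_2'$ is virtually abelian'' to ``$H$ acts trivially on $\bar L$,'' which critically uses the torsion-freeness of $\bar L \subseteq K_2^{\mathrm{ab}}$, together with setting up Chevalley--Weil so that the $H$-action on $K_2^{\mathrm{ab}} \otimes \Q$ is identified with left multiplication on a direct sum involving the regular representation. The ``moreover'' clause is then immediate by symmetry: if $K_1$ is also normal in $F$, the identical argument with the roles of $K_1$ and $K_2$ exchanged shows $K_1' \cap K_2'$ has infinite index in $K_2'$ as well.
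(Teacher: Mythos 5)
Your proof is correct, and it is essentially the paper's argument run in the contrapositive: both rest on the Chevalley--Weil theorem applied to $K_2\trianglelefteq F$ (faithfulness of the $F/K_2$-action on $H_1(K_2,\Q)$), the identification of the class of a commutator $[g,a]$ with $h\cdot[a]-[a]$ in $K_2^{\mathrm{ab}}$, and torsion-freeness of $K_2^{\mathrm{ab}}$. The paper argues directly, choosing $a\in K_1\setminus K_2$ and $b\in K_1\cap K_2$ with $a\cdot[b]\neq[b]$ so that all powers of $[a,b]$ lie in $K_1'$ but not in $K_2'$, while you assume commensurability, deduce that the image of $K_1$ in $F/K_2$ acts trivially and hence is trivial, and finish with the Schreier index formula --- the same ingredients packaged by contradiction.
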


The reader may be dissatisfied with the apparent asymmetry of Lemma \ref{lem:gaschutz}. In its application to commensurators of thin
groups, the asymmetry disappears, however. In deducing Lemma~\ref{lem:almost-inv} from Lemma~\ref{lem:gaschutz}, we set $H$
to be normal subgroup of finite index in a finite index free subgroup of $\pslz$,
which for the sake of explicitness we assume to be $\G(k)$ for some $k\geq 2$.
We assume furthermore that $H^g<\G(k)$. Of course, $H^g$ may fail 
to be normal in $\G(k)$. The conclusion of Lemma~\ref{lem:gaschutz} will imply (via Lemma~\ref{lem:filtration})
 that $\Phi(H)\cap \Phi(H^g)$ has infinite index in
$\Phi(H^g)$, but
says nothing about the index of $\Phi(H)\cap \Phi(H^g)$ in $\Phi(H)$.
However, if \[g\in\comm(\Phi(H))\] then we see that \[g^{-1}\in\comm(\Phi(H))\] as well, and so we obtain
that $\Phi(H)\cap \Phi(H^{g^{-1}})$ has infinite index in $\Phi(H)$, which symmetrizes the conclusion somewhat.
We note briefly that $\Phi(H^g)=\Phi(H)^g$.

Before proving Lemma~\ref{lem:gaschutz}, we recall the following classical fact about the homology of finite index subgroups of free groups.
This is also called Gasch\"utz's theorem  \cite{koberda-gd,gllw} and is a free-group version of a
well-known Theorem due to Chevalley and Weil \cite{cw}. We will not reproduce a proof of this result,
though we indicate that it can easily be deduced from the Lefschetz Fixed-Point Theorem:

\begin{theorem}\label{thm:gaschutz}
Let $F_k$ be a free group of rank $k$, and let $N\lhd F_k$ be a finite index normal subgroup with $Q=F_k/N$. Then as a $\Q[Q]$-module,
we have an isomorphism \[H_1(N,\Q)\cong \tau^k\oplus \rho^{k-1},\] where  $\tau$ denotes the trivial representation of $Q$, and 
$\rho=\rho_{reg}/\tau$ denotes the quotient of the regular representation by the trivial representation.
\end{theorem}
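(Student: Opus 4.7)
The plan is to realize $F_k$ as the fundamental group of the wedge $X = \bigvee_{i=1}^k S^1$, pass to the regular cover $\til X \to X$ corresponding to the normal subgroup $N$, and compute $H_1(\til X, \Q)$ as a $\Q[Q]$-module directly from the cellular chain complex. Since $N$ has finite index in $F_k$, the cover $\til X$ is a finite connected graph on which $Q = F_k/N$ acts freely by deck transformations, and $\pi_1(\til X) \cong N$. Because $\til X$ is a $K(N,1)$, we have $H_1(N,\Q) \cong H_1(\til X, \Q)$ as $\Q[Q]$-modules, and computing the right-hand side becomes the goal.

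First I would describe the cellular chain complex of $\til X$ as a complex of $\Q[Q]$-modules. The unique $0$-cell of $X$ lifts to $|Q|$ vertices permuted freely and transitively by $Q$, so $C_0(\til X, \Q) \cong \rho_{reg}$. Each of the $k$ edges of $X$ is a loop at the basepoint, so it lifts to $|Q|$ edges on which $Q$ again acts freely; hence $C_1(\til X, \Q) \cong \rho_{reg}^{\oplus k}$. Connectivity of $\til X$ yields $H_0(\til X, \Q) \cong \tau$.

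Next I would pass to the Grothendieck group $R_\Q(Q)$ of finite-dimensional $\Q[Q]$-modules. For the two-term complex $C_1 \to C_0$ with differential $\partial$, the short exact sequences $0 \to H_1 \to C_1 \to \operatorname{im}(\partial) \to 0$ and $0 \to \operatorname{im}(\partial) \to C_0 \to H_0 \to 0$ combine to give the Euler-characteristic identity $[H_1] - [H_0] = [C_1] - [C_0]$ in $R_\Q(Q)$, which upon substitution reads $[H_1] = (k-1)[\rho_{reg}] + [\tau]$. Splitting $\rho_{reg} \cong \tau \oplus \rho$ via Maschke (applied to the augmentation $\rho_{reg} \to \tau$) then rearranges this to $[H_1] = k[\tau] + (k-1)[\rho]$.

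The decisive step is to upgrade this virtual identity to an honest module isomorphism. Since $|Q|$ is invertible in $\Q$, Maschke's theorem makes $\Q[Q]$ semisimple, so every finite-dimensional $\Q[Q]$-module is determined up to isomorphism by its class in $R_\Q(Q)$. This yields $H_1(N,\Q) \cong \tau^{\oplus k} \oplus \rho^{\oplus(k-1)}$. There is no essential obstacle here; the only point requiring care is tracking $Q$-equivariance through the chain complex. As the authors indicate, one may instead compute the character of $H_1$ directly via Lefschetz: each nontrivial deck transformation acts freely on $\til X$ and so has Lefschetz number zero, while the identity contributes $\chi(\til X) = (1-k)|Q|$; these two constraints pin down the character of $H_1(\til X, \Q)$ and, by semisimplicity, reproduce the same decomposition.
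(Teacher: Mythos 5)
Your argument is correct, and it actually supplies more than the paper does: the paper deliberately omits a proof of Theorem~\ref{thm:gaschutz}, citing Chevalley--Weil and Gasch\"utz and only hinting that it ``can easily be deduced from the Lefschetz Fixed-Point Theorem.'' Your main route --- writing down the $\Q[Q]$-equivariant cellular chain complex of the regular cover $\til X$ of the wedge of $k$ circles, with $C_0\cong\rho_{reg}$, $C_1\cong\rho_{reg}^{\oplus k}$, $H_0\cong\tau$, taking the Euler-characteristic identity in the Grothendieck group, and invoking Maschke/semisimplicity to promote $[H_1]=k[\tau]+(k-1)[\rho]$ to an isomorphism --- is a complete and correct proof, and the dimension count $k+(k-1)(|Q|-1)=1+|Q|(k-1)$ is consistent with Nielsen--Schreier. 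Your closing paragraph is essentially the paper's hinted proof made precise: nontrivial deck transformations act freely, so their Lefschetz numbers vanish, giving $\chi_{H_1}(q)=1$ for $q\neq 1$, while $q=1$ gives $\dim H_1=1+|Q|(k-1)$; since over $\Q$ a finite-dimensional representation of a finite group is determined by its character, this pins down the same decomposition. The two routes are close cousins (both ultimately reduce to character/Grothendieck-group data plus semisimplicity): the Lefschetz version is quicker because it never mentions the chain complex, while your chain-complex version is more self-contained and makes the $Q$-equivariance transparent. The only point you gloss, harmlessly, is that the deck-group action on $H_1(\til X,\Q)$ agrees with the $Q$-action on $H_1(N,\Q)$ induced by the conjugation (outer) action of $F_k$ on $N$; this is standard and worth one sentence if you write it up.
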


The trivial isotypic
component of $H_1(N,\Q)$ is canonically isomorphic to $H_1(F_k,\Q)$ via the transfer map. Note that if $z\in H_1(N,\Q)$ is not
in the image of the transfer map, then $z$ is not fixed by some element of $Q$. Moreover, if $1\neq q\in Q$, then there is an element
$z\in H_1(N,\Q)$ such that $q\cdot z\neq z$.

In this section and throughout the rest of the paper, when we refer to \emph{homology}, we always mean the first homology (with coefficients
that will be clear from context), unless otherwise noted.

\begin{proof}[Proof of Lemma~\ref{lem:gaschutz}]
Since $K_1$ and $K_2$ are distinct, isomorphic, and both of finite index in
$F$, there can be no inclusion relations between $K_1$ and $K_2$.
It follows that there exists an element $a\in K_1\setminus K_2$.
Let \[b\in K_1\cap K_2,\] and let $x_b=[a,b]$. Observe that \[x_b\in K_1'\cap K_2.\]
Indeed, since $a$ and $b$ are in $K_1$, we have that $x_b\in K_1'$.
From the fact that $x_b=b^{-1}b^a$ and the normality of $K_2$, we see that $x_b\in K_2$.

We may now compute the homology class $[x_b]$ of $x_b$, as an element of $H_1(K_2,\Z)$:
we have \[[x_b]=a\cdot [b]-[b],\] where $a\cdot [b]$ denotes the image of $[b]$
under the action of $a$, viewed as an element of $F/K_2$.

Since $a\notin K_2$, we have that $a$ represents a nontrivial element of $F/K_2$. It follows that there is a homology class
\[z\in H_1(K_2,\Z)\] such that $a\cdot z\neq z$, by Theorem~\ref{thm:gaschutz}. Note that replacing $z$ by a nonzero
integral multiple, we have \[a\cdot (nz)=n(a\cdot z)\neq nz.\]
We may therefore choose an $n\in\Z\setminus \{0\}$
and an element $b\in K_1\cap K_2$ such that $[b]=nz$, since $K_1$ and $K_2$ have finite index in $F$.

With such a choice of $b$, we have that \[[x_b]\in H_1(K_2,\Z)\] is a nontrivial homology class. Since $K_2$ is free, we see that
no power of $x_b$ represents a trivial homology class of $K_2$. Therefore, for all $N\neq 0$, we have $x_b^N\in K_1'$
but $x_b^N\notin K_2'$.
It follows that $K_1'\cap K_2'$ has infinite index in $K_1'$.

If $K_1$ is also normal in $F$, then one can switch the roles of $K_1$ and $K_2$ to conclude that $K_1'\cap K_2'$ has infinite index
in $K_2'$ as well.
\end{proof}

Note that Lemma~\ref{lem:gaschutz} establishes Lemma~\ref{lem:almost-inv} for $\Phi(H)=H'$. We prove the following
fact, which now immediately implies Lemma~\ref{lem:almost-inv} and which will be useful in the sequel:

\begin{lem}\label{lem:filtration}
Let $A$ and $B$ be commensurable, nonabelian,
free subgroups of finite rank
in an ambient group $G$. Write $\Phi(A)$ and $\Phi(B)$ for a proper term of the lower central series or derived series of $A$ and $B$,
so that if $\Phi(A)=\gamma_i(A)$ then $\Phi(B)=\gamma_i(B)$, or if $\Phi(A)=D_i(A)$ then $\Phi(B)=D_i(B)$. In either case, the index
$i\geq 2$
is the same for both $A$ and $B$.

Suppose there exists an element $g\in A\cap B$ such that
$g\in A'$ and such that $g\in B\setminus B'$. Then $\Phi(A)$ and $\Phi(B)$ are not commensurable.
\end{lem}

\begin{proof}
By the definition of $g$, we have that $g^N\in A'$ for all $N$ and $g^N\notin B'$ for all $N\neq 0$. We deal with the two series separately,
starting with the lower central series. The $n^{th}$ term of the lower central series of a group $H$  will be denoted by $\gamma_n(H)$,
 and the $n^{th}$ term of the derived series of a group $H$  will be denoted by $D_n(H)$.

Note that $g\in\gamma_2(A)$ and \[g\in\gamma_1(B)\setminus\gamma_2(B).\] Note also that there is an element $x\in A\cap B$ such that
$[g,x]\in\gamma_3(A)$ and such that \[[g,x]\in\gamma_2(B)\setminus\gamma_3(B).\] Indeed, it suffices to choose an element $x$ lying
in $A\cap B$ whose integral homology class in $B$ is nonzero, which exists since $A$ and $B$ are commensurable (cf.~\cite{MKS}).
Note that $[g,x]\in A\cap B$.

By an easy induction, we can find elements $\{y_n\}_{n\geq 1}\subset A\cap B$ such that
$y_n\in\gamma_{n+1}(A)$ and such that \[y_n\in\gamma_n(B)\setminus\gamma_{n+1}(B).\] Again, we can simply define
$y_{n+1}=[y_n,x]$, where $x\in A\cap B$ represents a nontrivial homology class of $B$.
For a finitely generated free group $F$, we
have that $\gamma_n(F)/\gamma_{n+1}(F)$ is a finitely generated torsion-free abelian group for all $n\geq 1$ (see again~\cite{MKS}).
It follows that for all $N\neq 0$, we have $y_n^N\in\gamma_{n+1}(A)$ and \[y_n^N\in\gamma_n(B)\setminus\gamma_{n+1}(B),\] whence
no proper terms of the lower central series of $A$ and $B$ are commensurable.

We now consider the derived series, and begin with the same element $g$ as above. We consider the groups $H_1(A',\Z)$ and $H_1(B',\Z)$,
both of which are infinitely generated free abelian groups. Since \[g\in A'=D_2(A),\]
we have that $g$ represents a (possibly trivial) element of
$H_1(A',\Z)$. Since $g\notin B'$, we have that $g$ represents a nontrivial element of $H_1(B,\Z)$. Thus, if $z\in H_1(B',\Z)$ is a nontrivial
homology class, then for all sufficiently large $N$ we have that $g^N\cdot z-z$ represents a nontrivial element of $H_1(B',\Z)$. This claim
is easily checked using covering space theory, by choosing a finite wedge of circles whose fundamental group is $B$ and taking the cover
of the wedge corresponding to $B'$. Then one simply uses the fact that $H_1(B,\Z)$ acts properly discontinuously on the corresponding cover.

Since $A$ and $B$ are commensurable, we can choose an element $x\in A'\cap B'$ such that $x$ represents a nontrivial element of
$H_1(B',\Z)$. Indeed, we can choose any two element $b_1,b_2$ in a free basis for $B$ and nonzero exponents $n_1$ and $n_2$
such that $b_i^{n_i}\in A$ for $i\in \{1,2\}$. Then, the element $x=[b_1^{n_1},b_2^{n_2}]$ will represent a nontrivial element of $H_1(B',\Z)$.

Note that for all $N$, we have \[y=[g^N,x]\in D_3(A).\] On the other hand, for $N$ sufficiently large we have that $y$ represents a nontrivial
element of $H_1(B',\Z)$, and therefore \[y\in D_2(B)\setminus D_3(B).\] Since $H_1(B',\Z)$ is torsion-free, it follows that $D_3(A)$ and
$D_3(B)$ are not commensurable.

An easy induction now shows that $D_i(A)$ and $D_i(B)$ are not commensurable for $i\geq 2$. Indeed, suppose we have produced
an element $y_i\in A\cap B$ such that for all nonzero $N$, we have $y_i^N\in D_i(A)$ and
\[y_i^N\in D_{i-1}(B)\setminus D_i(B).\] As in the case
$i=2$, it is straightforward to construct an element $x\in D_i(A)\cap D_i(B)$ such that $x$ represents a nontrivial element
of $H_1(D_i(B),\Z)$. Then for all $N$ we have \[y_{i+1}=[y_i^N,x]\in D_{i+1}(A).\] For all sufficiently large $N$, however,
we have that $y_{i+1}$
represents a nontrivial homology class in $H_1(D_i(B),\Z)$. Using again the fact that $H_1(D_i(B),\Z)$ is torsion-free, no power of
$y_{i+1}$ lies in $D_{i+1}(B)$, so that $D_{i+1}(A)$ and $D_{i+1}(B)$ are not commensurable.
\end{proof}



\section{Integral commensurators}\label{sec:intcomm}

In this section, we establish the following fact:

\begin{lem}\label{lem:integcomm} Let $H<\pslz$ be a non-elementary subgroup and
let $g\in\comm( H)$.  Then $g^2\in\pslq\sqQ$.
\end{lem}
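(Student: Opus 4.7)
The plan is to combine Zariski density with the Skolem--Noether theorem. After replacing $H$ by the non-elementary finite-index subgroup $K = H\cap g^{-1}Hg$, both $K$ and $gKg^{-1}$ land in $\pslz$, and the $\Q$-subalgebra of $M_2(\R)$ that each of them spans will be forced to coincide with $M_2(\Q)$, which pins $g$ down up to a real scalar and a rational matrix.

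First, since $g\in\comm(H)$, the subgroup $K = H\cap g^{-1}Hg$ has finite index in $H$, is therefore non-elementary, and satisfies $gKg^{-1}\subset H\subset\pslz$. The subgroup $K^{(2)}=\langle k^2 : k\in K\rangle$ lifts canonically from $\pslz$ to $\slz$, because the $\pm I$-ambiguity of $\slr$-representatives disappears under squaring; likewise $gK^{(2)}g^{-1}=(gKg^{-1})^{(2)}$ lies canonically in $\slz\subset M_2(\Q)$.

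Next, I would show that the $\Q$-linear span of $K^{(2)}$ inside $M_2(\R)$ equals $M_2(\Q)$. Being non-elementary, $K$ contains two hyperbolic elements with distinct axes; since squaring preserves axes, $K^{(2)}$ again contains hyperbolic elements with distinct axes, is non-elementary, and is therefore Zariski dense in $\pslr$, so its canonical lift is Zariski dense in $\slr$. Since $\slr$ $\R$-linearly spans $M_2(\R)$ (an easy direct check using $I$, a unipotent, and $\mathrm{diag}(t,t^{-1})$), so does any Zariski dense subset; extracting four $\R$-linearly independent elements of $K^{(2)}\subset M_2(\Q)$ then gives matrices that are automatically $\Q$-linearly independent, hence a $\Q$-basis of the $4$-dimensional $\Q$-vector space $M_2(\Q)$. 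Running the same argument for $gK^{(2)}g^{-1}$, and using that $\Q$-linear span commutes with conjugation by $g$, yields
\[
g\,M_2(\Q)\,g^{-1} \;=\; M_2(\Q).
\]

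Finally, Skolem--Noether applied to the induced $\Q$-algebra automorphism of $M_2(\Q)$ produces an $h\in\mathrm{GL}_2(\Q)$ with $gxg^{-1}=hxh^{-1}$ for every $x\in M_2(\Q)$; thus $h^{-1}g$ centralizes $M_2(\Q)$ inside $M_2(\R)$, and this centralizer is $\R\cdot I$, so $g=\lambda h$ with $\lambda\in\R^{*}$. Choosing the $\slr$-lift of $g$ with $\det g=1$ forces $\lambda^2=1/\det h\in\Q$, and therefore $g^2=\lambda^2 h^2$ is a rational matrix of determinant $1$, i.e.\ $g^2\in\mathrm{SL}_2(\Q)$, which places $g^2$ in $\pslq\subset\pslq\sqQ$ as required. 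The main obstacle is the clean descent from Zariski density of $K^{(2)}$ in $\slr$ to $\Q$-linear spanning of $M_2(\Q)$ and the verification that conjugation by $g\in\mathrm{GL}_2(\R)$ genuinely restricts to a $\Q$-algebra automorphism of $M_2(\Q)$; once that is in hand, Skolem--Noether closes the argument in one line.
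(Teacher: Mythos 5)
Your proof is correct and follows essentially the same route as the paper's primary proof (Proposition~\ref{reid}, due to Reid): in both arguments one shows that conjugation by $g$ preserves the $4$-dimensional $\Q$-algebra spanned by a finite-index subgroup of $H$ (namely $M_2(\Q)$), applies Skolem--Noether to write $g$ as a real scalar times a rational matrix, and uses $\det g=1$ to force the square of that scalar into $\Q$. The only real differences are that you obtain the equality of algebras by a direct Zariski-density spanning argument instead of the trace-field/quaternion-algebra formalism of Maclachlan--Reid, and that you treat the $\mathrm{SL}_2$-versus-$\mathrm{PSL}_2$ lifting ambiguity explicitly by passing to squares (a point the paper's proof glosses over), which is also why you land on the slightly sharper conclusion $g^2\in\pslq$.
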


Here, non-elementary simply means non-solvable. The proof of
Lemma \ref{lem:integcomm} is given in Section \ref{sec:ant} below and draws from the theory of invariant trace fields and quaternion division 
algebras. We are grateful to Alan Reid for teaching us about these ideas.

\subsection{Quaternion division algebras and commensurators}\label{sec:ant}
In this section, we give our first proof of Lemma~\ref{lem:integcomm}.

We refer the reader to \cite{mr} for the relevant basics on invariant trace fields and quaternion division algebras. The proof of \ref{reid} below is guided and informed  by the argument on page 118 of \cite{mr} proving Theorem 3.3.4 there (see especially Equations 3.8 and 3.9).

\begin{prop}\label{reid} {\bf (Reid)}
	Let  $H$ be a non-elementary (not necessarily discrete) subgroup of $\pslc$ such that \[K:=\Q(\tr H) = \Q(\tr H')\] for any 
	subgroup $H'$ of finite index in $H$.
Let $B=A_0H$ denote the quaternion algebra generated over $K$. That is, $B$ is obtained by taking finite linear combinations of elements of $H$ over $K$~\cite[Sec 3.2]{mr}. Let $ B^*$ denote the set of invertible elements of $B$.
Suppose that $H$ and $xHx^{-1}$ are 
commensurable, i.e. $x\in {\rm Comm}_\pslc (H)$. Then the following conclusions hold:
\begin{enumerate}
\item $ x= ta$ where $a\in B^*$ and $t$ is a non-zero complex number.
\item $t^2\in K$ and so $x^2\in B^*$. 
\end{enumerate}
\end{prop}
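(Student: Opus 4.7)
The plan is to produce a Skolem--Noether style inner automorphism of $B$ realized by conjugation by $x$, and then use a determinant computation to extract the scalar factor $t$.

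First I would set $H_0 := H \cap xHx^{-1}$, which has finite index in both $H$ and $xHx^{-1}$ by the commensurator hypothesis. Both $H_0$ and $x^{-1}H_0 x$ are then finite-index subgroups of $H$, so the standing hypothesis gives
\[ \Q(\tr H_0) \;=\; \Q(\tr(x^{-1}H_0 x)) \;=\; K. \]
Since $H$ is non-elementary, so are $H_0$ and $x^{-1}H_0 x$, and consequently $A_0 H_0$ and $A_0(x^{-1}H_0 x)$ are $4$-dimensional $K$-subalgebras of $M_2(\C)$. Both are contained in $B$ and have the same $K$-dimension as $B$, which forces $A_0 H_0 = A_0(x^{-1}H_0 x) = B$.

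Next I would observe that conjugation $\sigma_x\colon b\mapsto x^{-1}bx$ carries $H_0$ into $x^{-1}H_0 x$ and fixes the scalar field $K$ pointwise. Extending $K$-linearly, $\sigma_x$ is a $K$-algebra isomorphism $A_0 H_0 \to A_0(x^{-1}H_0 x)$, hence a $K$-algebra automorphism of $B$. The Skolem--Noether theorem, applicable because $B$ is central simple over $K$, produces $a \in B^{*}$ with $\sigma_x(b) = a^{-1}ba$ for every $b\in B$. Equivalently, $xa^{-1}$ centralizes $B$ inside $M_2(\C)$. Because $B$ is a quaternion algebra, $B\otimes_K \C \cong M_2(\C)$, so $B\cdot \C = M_2(\C)$ and the centralizer of $B$ in $M_2(\C)$ is the center $\C\cdot I$. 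Hence $xa^{-1} = tI$ for some $t\in\C^{*}$, establishing conclusion~(1).

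For conclusion~(2), I would lift $x$ to $\mathrm{SL}_2(\C)$ and take determinants in $x = ta$. This gives $1 = \det(x) = t^{2}\det(a)$, and since $\det$ restricted to $B$ is the reduced norm on the quaternion algebra $B/K$, we have $\det(a)\in K^{\times}$. Therefore $t^{2} = \det(a)^{-1}\in K$, and consequently $x^{2} = t^{2} a^{2}\in B$. Since $x$ is invertible, so is $x^{2}$, giving $x^{2}\in B^{*}$.

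The main obstacle is the clean verification that both $A_0 H_0$ and $A_0(x^{-1}H_0 x)$ coincide with $B$, since this is exactly what allows $\sigma_x$ to be viewed as an automorphism of a single algebra and Skolem--Noether to apply. This is precisely where the hypothesis of trace-field stability under passage to finite-index subgroups is essential; without it, conjugation by $x$ would only give an isomorphism between a priori distinct $K$-subalgebras of $M_2(\C)$, and the argument would not directly close.
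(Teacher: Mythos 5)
Your proof is correct and follows essentially the same route as the paper's: intersect $H$ with $xHx^{-1}$, use the trace-field hypothesis to identify the relevant quaternion algebras with $B$, apply Skolem--Noether to realize conjugation by $x$ as an inner automorphism, deduce $xa^{-1}\in\C\cdot I$ from the centralizer of $B$ in $M_2(\C)$, and finish with the determinant computation $1=t^2\det(a)$. Your dimension-count justification that $A_0H_0=A_0(x^{-1}H_0x)=B$ is just a slightly more explicit version of the paper's step that the three quaternion algebras over $K$ coincide.
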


\begin{proof}[Proof of Lemma \ref{lem:integcomm} assuming Proposition \ref{reid}]
Note that the hypothesis \[\Q=\Q(\tr H) = \Q(\tr H')\] in Proposition \ref{reid} is satisfied for arbitrary subgroups of $\pslz$.
Moreover, we have that \[B^*=\pslq\sqQ\] in our notation, whence the desired conclusion follows.
\end{proof}

We now turn to the proof of  Proposition \ref{reid}:

\begin{proof}[Proof of Proposition~\ref{reid}]
Let \[H_1=H\cap xHx^{-1},\] so that $H_1$ has finite index in $H$ and $xHx^{-1}$. 
Hence, \[\Q(\tr H_1)=\Q(\tr H)=\Q(\tr(xHx^{-1}))\] by hypothesis.
Hence the quaternion algebras $B=A_0H$, \[A_0(xHx^{-1})=xA_0Hx^{-1},\] and 
$A_0H_1$ are all defined over $K$ and hence
 are all equal.

It follows from the Skolem--Noether Theorem \cite[2.9.8]{mr} there exists 
$a\in B^*$ such that 
\begin{equation}
aga^{-1}=xgx^{-1}
\end{equation}
 for all $g\in B$. Thus 
 \begin{equation}\label{eq:conj}
 a^{-1}x g 
=ga^{-1}x
\end{equation} for all $g \in B$.

We would like to conclude that \[a^{-1}x\in Z(B)=K,\] but we cannot immediately do this
because $a^{-1}x$ need not be in $B$.
However, after  tensoring  with  $\C$ over $K$, the Equation \ref{eq:conj} continues to hold: $$a^{-1}x 
g =ga^{-1}x$$ for all $g\in M_2(\C)$.
Hence, \[a^{-1}x = t \in\C\] is a non-zero element and $x=ta$ as required, which proves the first conclusion.

Finally, the equation \[\det(x)=\det(ta)\] gives us \[1=t^2\det(a),\] and we have $\det(a) \in K$ by assumption. 
It follows that $t^2\in K$, and so \[x^2=t^2a^2\in B^*,\]
which establishes the second part of the conclusion.
\end{proof}

\section{Homological pseudo-actions and completing the proof} \label{sec:homology}

In this section, we complete the proof of Theorem~\ref{thm:main}. Let $H\lhd\pslz$ be a finite index normal subgroup which is contained in
the principal congruence subgroup $\G(k)$. We have shown that if \[g\in\comm(\Phi(H))\] then $g^2\in\pslq\sqQ$
and hence \[g^2\in\comm(\pslz).\]
Therefore, we have that $H\cap H^{g^2}$ is a finite index subgroup of both $H$ and $H^{g^2}$.
We wish to argue that $H^{g^2}<\G(k)$ as well,
so that we can apply Lemma~\ref{lem:invariant} or Lemma~\ref{lem:almost-inv}, depending on whether $H=H^{g^2}$ or not.
Once we achieve this goal, we can prove the main technical result of this section:

\begin{lem}\label{lem:pseudo-para}
Let $H<\pslz$ be a finite index normal subgroup
which is contained in $\G(k)$ for some $k\geq 2$, and let $g\in\pslq\sqQ$ commensurate $\Phi(H)$ for some proper term.
Then $g\in\pslz$.
\end{lem}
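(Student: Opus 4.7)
The plan is to observe that the homological pseudo-action machinery developed in Sections~\ref{sec:pseudo} and \ref{sec:parabolics} applies verbatim to any element of $\pslq\sqQ$ that commensurates $\Phi(H)$, not only to the specific element $g^2$ for which it was written, and then to finish via a dichotomy. Since $g \in \pslq\sqQ = \comm(\pslz)$ by hypothesis, the subgroups $H$ and $H^g$ are commensurable finite index subgroups of $\pslq$, and Section~\ref{sec:pseudo} produces a pseudo-action of $H^g$ on $H_1(H;\Z)$ in this setting as well.

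The first step is to show this pseudo-action is trivial. If not, some $y \in H^g$ and some free generator $x_i$ of $H$ yield a nonzero class $y\cdot z_i - z_i$; realizing it as $[y,x_i^N]$ for suitable $N$ produces an element lying in $(H^g)'$ but whose image in $H_1(H;\Z)$ is nonzero, so it lies in $H \setminus H'$. By Lemma~\ref{lem:filtration} this forces $\Phi(H)$ and $\Phi(H)^g = \Phi(H^g)$ to be incommensurable, contradicting $g \in \comm(\Phi(H))$. Next, I would rerun the parabolic argument of Section~\ref{sec:parabolics} with $g$ replacing $g^2$: triviality of the pseudo-action forces the cuspidal homology classes of $H$ to be preserved, and Corollary~\ref{cor:homology} then requires, for every $y \in H^g$, conjugates $y^h$ and $y^q$ with $h,q \in H$ fixing $\infty$ and $0$ respectively; multiplying the resulting upper and lower triangular matrices and reducing modulo $k$ pins the off-diagonal entries into $k\Z$, yielding $y \in \G(k)$ and hence $H^g < \G(k)$.

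At this point both $H$ and $H^g$ lie inside the free group $F = \G(k)$ (free by Lemma~\ref{lem:pctf}), and $H$ is normal in $F$ since it is normal in all of $\pslz$. A clean case split finishes the argument. If $H^g = H$, then Lemma~\ref{lem:invariant} immediately gives $g \in \pslz$. If instead $H^g \neq H$, the hypotheses of Lemma~\ref{lem:almost-inv} are satisfied with ambient free group $F = \G(k)$, and that lemma asserts $g$ cannot commensurate $\Phi(H)$, contrary to hypothesis. I do not expect a genuine obstacle here: the technical substance of the lemma is already packaged in the preceding two subsections together with Lemmas~\ref{lem:filtration}, \ref{lem:invariant}, and \ref{lem:almost-inv}, and writing the proof amounts to checking that substituting $g$ for $g^2$ throughout causes no essential change, and then invoking the dichotomy.
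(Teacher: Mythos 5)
Your proposal is correct and follows the paper's own proof essentially verbatim: the paper likewise establishes $H^g<\G(k)$ by running the pseudo-action and parabolic arguments of Sections~\ref{sec:pseudo} and~\ref{sec:parabolics} with $g$ in place of $g^2$ (noting this is legitimate precisely because $g\in\pslq\sqQ$ is assumed), and then concludes via the same dichotomy, invoking Lemma~\ref{lem:invariant} when $H^g=H$ and Lemma~\ref{lem:almost-inv} (with $F=\G(k)$ free and $H$ normal in it) when $H^g\neq H$. The only difference is expository: you spell out the substitution of $g$ for $g^2$ in detail, while the paper compresses it into a remark.
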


The proof of Theorem~\ref{thm:main} follows quickly from Lemma~\ref{lem:pseudo-para}.

\begin{proof}[Proof of Theorem~\ref{thm:main}, assuming Lemma~\ref{lem:pseudo-para}]
Let $H$ be as in the statement of the theorem and let $g\in\comm(\Phi(H))$.
By Lemma \ref{lem:integcomm}, we see that \[g^2\in\pslq\sqQ.\]
By Lemma~\ref{lem:pseudo-para}, we have that $g^2\in\pslz$. It follows
that the square of every element in \[\comm(\Phi(H))\] lies in $\pslz$, so that the group \[\comm(\Phi(H))\]
is discrete by Lemma~\ref{lem:square-discrete}.
\end{proof}

Thus, it remains to establish Lemma~\ref{lem:pseudo-para}, which will occupy the remainder of this section.

\subsection{Building homological pseudo-actions}\label{sec:pseudo}
Note that $H$ and $H^{g^2}$ both lie in $\pslq$, as is checked by an easy computation. Indeed, $H$
is a subgroup of $\pslz$ and is therefore a subgroup of $\pslq$. The group
$H^{g^2}$ lies in $\pslq$ by Lemma~\ref{lem:rat-value}, since $g^2\in
\pslq\sqQ$.
Let $H=\langle x_1,\ldots,x_m\rangle$, where $\{x_1,\ldots,x_n\}$ is a
 free basis for $H$. We write $[x_i]$ for the homology class of $x_i$. We have that \[\{[x_1],\ldots,[x_n]\}\] generate the integral homology
of $H$, and for each $N\geq 1$, we have that \[\{[x_1^N],\ldots,[x_n^N]\}\] generate a finite index subgroup of the integral homology of $H$.
Suppressing $N$ from the notation, we sometimes write $z_i=[x_i^N]$. For $y\in H^{g^2}$ arbitrary,
we consider the homology class of the commutator $[y,x_i^N]$, when it makes sense.
Since $y\in H^{g^2}$ and since $H$ and $H^{g^2}$ are commensurable,
there exist arbitrarily large values of $N$ for which \[[y,x_i^N]\in (H^{g^2})',\] since
we merely choose values of $N$ such that $x_i^N\in H\cap H^{g^2}$.

On the other hand, we have that $x_i^N\in H$ for all $i$ and $N$. Similarly, there exist arbitrarily large values of $N$ such that
$(x_i^N)^y\in H$ as well, independently of $i$, since $y\in\pslq$ and hence $H$ and $H^y$ are commensurable.
For these values of $N$,
we may make sense of the homology class $y\cdot z_i-z_i$ for all $i$, which is the homology class of $[y,x_i^N]$
in $H$. This defines a \emph{pseudo-action} of $H^{g^2}$
on the integral homology of $H$. We call it a  pseudo-action since it is not defined for all $N$.

Suppose that $y\cdot z_i-z_i$ is nonzero for some $i$. Then we obtain \[[y,x_i^N]\in H\setminus H'.\] Then by
Lemma~\ref{lem:filtration}, we see that the group $\Phi(H)$ and the group \[\Phi(H^{g^2})=\Phi(H)^{g^2}\] are not commensurable.
Note that the previous discussion was symmetric in $H$ and $H^{g^2}$, so that we obtain elements of $H^{g^2}\cap H'$, no powers of
which lie in $(H^{g^2})'$ unless the homological pseudo-action of $H$ on the integral homology of $H^{g^2}$ is trivial.

Therefore, if $g$ commensurates $\Phi(H)$ for some proper term of the lower central series or derived series of $H$,
we must have that $y\cdot z_i-z_i$ is the trivial integral homology class of $H$ for all $i$, whenever this homology class is defined.
In particular, the pseudo-action of $y$ on the integral homology of $H$ is trivial.

\subsection{Trivial homology pseudo-actions and parabolics}\label{sec:parabolics}

Let $\gamma\in H$ be a parabolic element fixing infinity, which exists because $H$ has finite index in $\pslz$. 
Let $y\in H^{g^2}$ be arbitrary, and suppose that the $y$-pseudo-action
on the integral homology of $H$ is trivial. 

We see that there is a positive integer $N$ such that the homology class of $\gamma^N$ is
invariant under $y$. That is, \[[(\gamma^N)^y]=[\gamma^N]\] as homology classes of $\Hyp^2/H$.
Since $H<\G(k)$ for some $k\geq 2$, we see that each cusp of
$\Hyp^2/H$ is homologically nontrivial, and no two distinct cusps represent the same homology class, as follows from 
Corollary \ref{cor:homology}.

It follows that the element $(\gamma^N)^y$ represents a power of a free homotopy class of a cusp of $\Hyp^2/H$,
which is equal to the free homotopy class represented by $\gamma^N$.
In particular, we have that $(\gamma^N)^y$ is a parabolic element of $\pslq$, and the
fixed point of $(\gamma^N)^y$ is in the $H$-orbit of infinity.

It follows that there is an element $h\in H$ such that \[((\gamma^N)^y)^h=(\gamma^N)^{yh}\] stabilizes infinity.
Since both $H$ and $H^{g^2}$ are subgroups of $\pslq$, it follows that
$yh$ lies in the stabilizer of infinity in $\pslq$, so that we
have \[yh=\begin{pmatrix}r&t\\0&r^{-1}\end{pmatrix}\]
for some suitable $r,t\in\Q$.

Writing
\[\gamma^N=\begin{pmatrix}1&M\\0&1\end{pmatrix}\] for some suitable $M\in\Z$, then we see that
\[(\gamma^N)^{yh}=\begin{pmatrix}1&r^{-2}M\\0&1\end{pmatrix}.\] Since \[[(\gamma^N)^{yh}]=[(\gamma^N)^y]=[\gamma^N],\]
it follows that we must have $r=1$,
so that \[yh=\begin{pmatrix}1&t\\0&1\end{pmatrix}.\]

Repeating the same argument for a parabolic element
of $H$ stabilizing $0$, we find an element $q\in H$ such that \[yq=\begin{pmatrix}1&0\\s&1\end{pmatrix}\] for some suitable
$s\in\Q$. We now multiply $(yh)^{-1}$ and $(yq)$ to get \[\begin{pmatrix}1-ts&-t\\s&1\end{pmatrix}\in H.\] Since $H<\G(k)$,  we
see that $t,s\in k\Z$. It follows that $y\in\G(k)$. Since $y\in H^{g^2}$ was chosen arbitrarily, we see that $H^{g^2}<\G(k)$.

To conclude the discussion of the last two subsections, we can finally prove Lemma~\ref{lem:pseudo-para}, which
completes the proof of Theorem~\ref{thm:main}.

\begin{proof}[Proof of Lemma~\ref{lem:pseudo-para}]
The discussion in Sections~\ref{sec:pseudo} and~\ref{sec:parabolics} implies that $H^{g}<\G(k)$. Note here that we conjugate by $g$ and
not by $g^2$, since we already assume that $g\in\pslq\sqQ$.

Note that $H$ is normal in $\pslz$ and therefore is normal in $\G(k)$ as well.
Since $\G(k)$ is a free group
and since $H$ and $H^{g}$ are isomorphic, if $H\neq H^{g}$ then we can apply Lemma~\ref{lem:almost-inv} to conclude that
$\Phi(H)$ and $\Phi(H)^{g}$ are not commensurable. If $H=H^{g}$ then Lemma~\ref{lem:invariant} implies that $g\in\pslz$.
\end{proof}


\begin{thebibliography}{10}
\bibliographystyle{amsplain}

\bibitem{benoistgafa}
Y.~Benoist.
\newblock {Propri{\'e}t{\'e}s asymptotiques des groupes lin{\'e}aires.}
\newblock {\em Geom. Funct. Anal., 7(1)}, pages 1--47, 1997.

\bibitem{biswasmj}
K.~Biswas and M.~Mj,
\newblock Pattern rigidity in hyperbolic spaces: duality and {PD} subgroups.
\newblock {\em Groups Geom. Dyn.}, 6(1):97--123, 2012.

\bibitem{Borel66}
A.~Borel.
\newblock Density and maximality of arithmetic subgroups
\newblock {\em J. Reine Angew. Math.} 224 (1966), 78--89.

\bibitem{cw}
C.~Chevalley, A.~Weil, and E.~Hecke.
\newblock \"{U}ber das verhalten der integrale 1. gattung bei automorphismen
  des funktionenk\"{o}rpers.
\newblock {\em Abh. Math. Sem. Univ. Hamburg}, 10(1):358--361, 1934.

\bibitem{gllw}
F.~, M.~Larsen, A.~Lubotzky, and J.~Malestein.
\newblock Arithmetic quotients of the mapping class group.
\newblock {\em Geom. Funct. Anal.}, 25(5):1493--1542, 2015.

\bibitem{greenberg}
L.~Greenberg.
\newblock Commensurable groups of {M}oebius transformations.
\newblock pages 227--237. Ann. of Math. Studies, No. 79, 1974.

\bibitem{koberda-gd}
T.~Koberda,
\newblock Asymptotic linearity of the mapping class group and a homological
  version of the {N}ielsen-{T}hurston classification.
\newblock {\em Geom. Dedicata}, 156:13--30, 2012.

\bibitem{llr}
C.~J. Leininger, D.~D. Long, and A.~W. Reid.
\newblock {Commensurators of non-free finitely generated Kleinian groups}.
\newblock {\em Algebraic and Geometric Topology 11}, pages 605--624, 2011.

\bibitem{loeffler}
D. Loeffler,
\newblock {Modular forms course},

\newblock \url{https://homepages.warwick.ac.uk/~masiao/modforms/solutions2.pdf},
2008


\bibitem{MKS}
W.~Magnus, A.~Karrass, and D.~Solitar,
\newblock Combinatorial group theory: Presentations of groups in terms of generators and relations.
\newblock {\em Interscience Publishers}, 1966.

\bibitem{mr}
C.~Maclachlan and A.~W.~Reid, 
\newblock \emph{The arithmetic of hyperbolic 3-manifolds}.
\newblock Springer-Verlag, New York, 2003, Graduate Texts in Mathematics,
219.

\bibitem{margulis}
G.~A. Margulis,
\newblock Discrete {S}ubgroups of {S}emisimple {L}ie {G}roups.
\newblock {\em Springer}, 1990.

\bibitem{mahan-commens}
M.~Mj,
\newblock {On Discreteness of Commensurators}.
\newblock {\em Geom. Topol. 15}, pages 331--350, 2011.

\bibitem{mahan-pattern}
M.~Mj,
\newblock Pattern rigidity and the {H}ilbert-{S}mith conjecture.
\newblock {\em Geom. Topol.}, 16(2):1205--1246, 2012.

\bibitem{morris-book}
D.~Witte Morris,
\newblock {\em Introduction to arithmetic groups}.
\newblock Deductive Press, 2015.

\bibitem{Raghunathan1972}
M.~S. Raghunathan,
\newblock \emph{Discrete subgroups of {L}ie groups}.
\newblock Springer-Verlag, New York-Heidelberg, 1972, Ergebnisse der Mathematik und ihrer Grenzgebiete,
  Band 68.

\bibitem{sarnak-thin}
P.~Sarnak,
\newblock Notes on thin matrix groups.
\newblock In {\em Thin groups and superstrong approximation}, volume~61 of {\em
  Math. Sci. Res. Inst. Publ.}, pages 343--362. Cambridge Univ. Press,
  Cambridge, 2014.
  
  \bibitem{schwartz-pihes}
  R.~E. Schwartz,
  \newblock The quasi-isometry classification of rank one lattices.
  \newblock {\em Inst. Hautes \'{E}tudes Sci. Publ. Math.}, (82):133--168 (1996),
  1995.
  
  \bibitem{schwartz-inv}
  R.~E. Schwartz,
  \newblock Symmetric patterns of geodesics and automorphisms of surface groups.
  \newblock {\em Invent. math.} 128, (1997) No. 1, 177--199.


\end{thebibliography}
\end{document}